\documentclass[reqno,11pt]{amsart}

\usepackage{amsmath, amssymb, amsfonts, amsthm}  % AMS 数学包
\usepackage{mathrsfs}    % 花体字母
\usepackage{bbm}         % 黑板粗体 \mathbbm
\usepackage{dsfont}      % \mathds
\usepackage{latexsym}    % 经典符号
\usepackage{euscript}    % \EuScript
\usepackage{hyperref}
\usepackage{empheq} 
\usepackage{geometry}
\allowdisplaybreaks      % 允许公式跨页断开

\usepackage{xcolor}      % 颜色支持
\usepackage{enumitem}    % 可定制列表
\usepackage{enumerate}   % enumerate 扩展
\makeatletter

\newcommand{\Rmnum}[1]{\expandafter\@slowromancap\romannumeral #1@}
\makeatother

\numberwithin{equation}{section}  % 公式按 section 编号

\newtheorem{theorem}{Theorem}[section]
\newtheorem{lemma}[theorem]{Lemma}

\newtheorem{proposition}[theorem]{Proposition}
\newtheorem{remark}[theorem]{Remark}

\begin{document}
	\title[Exact approximation order of real numbers in Cantor series expansions]{Exact approximation order of real numbers in Cantor series expansions}

	\author[W. Cheng]{Wanjin Cheng}
	\address[Wanjin Cheng]{School of Mathematics, South China University of Technology, Guangzhou, 510640, China}
	\email{chengwj0227@163.com}
	\author[X. Zhang]{Xinyun Zhang$^{\star}$}
	\address[Xinyun Zhang]{School of Mathematics and Information Science, Nanchang Hangkong University, Nanchang, 330063, China}
	\email[corresponding author]{xinyunzhangnc@163.com}
	
	\keywords{Cantor series expansions, exact approximation, Hausdorff dimension}
	\subjclass[2010]{Primary 11K55; Secondary 28A80, 11J83}
	\thanks{$^{\ast}$ Corresponding author.}
	\begin{abstract}
		Let \(Q = \{q_n\}_{n \ge 1}\) be a sequence of integers with \(q_n \ge 2\) for all \(n \in\mathbb{N}\). For any real number $x \in [0,1)$, it can be expanded into the following infinite series:
		\[
		x =
		\frac{\varepsilon_1(x)}{q_1}
		+ \frac{\varepsilon_2(x)}{q_1 q_2}
		+ \cdots
		+ \frac{\varepsilon_n(x)}{q_1 q_2 \cdots q_n}
		+ \cdots,
		\]
		which is called the Cantor series expansion of $x$.
		
		We introduce the exact spproximation order in Cantor series expansions. It is analogous to the notion appearing in classical Diophantine approximation. More precisely, let $\omega_n(x)$ denote the $n$-th partial sum of the Cantor series expansion of $x$. For any monotonic function $\psi$, we study the metric theory of the set $E_c(\psi)$ of points that are exactly $\psi$-approximable by $\omega_n(x)$.
%		$$W(\psi):=\Big\{x\in[0,1): x-\omega_n(x)<\frac{\psi(n)}{q_1\cdots q_n} ~\text{for infintely many} ~n\in\mathbb{N}\Big\}$$
%		Assuming $\lim_{n\to\infty}\frac{\log q_n}{\log(q_1\cdots q_n)}=0$,
%		we obtain the exact Hausdorff dimension of 
%		\[E(\psi):=W(\psi)\backslash
%		\bigcup_{0<c<1}W(c\psi).\]
	\end{abstract}
	\maketitle

	\section{introduction}

	Metric Diophantine approximation is concerned with the quantitative study of how well real numbers can be approximated by rational numbers.
This originates in the work of Khintchine \cite{Khinchin24} who studied the Lebesgue measure of the sets
\[
W(\psi) :=
\Bigl\{
x \in \mathbb{R} :
\bigl|x - \tfrac{p}{q}\bigr| < \psi(q)
\ \text{for infinitely many } (p,q)\in\mathbb{Z}\times\mathbb{N}
\Bigr\}.
\]
Denote $W(\tau):=W(q\to q^{\tau})$. Jarn\'ik \cite{Jarnik29} and Besicovitch \cite{Besicovitch34} independently proved that $\dim_HW(\tau)=\frac{2}{\tau}$. These results form the foundation of the metric theory of Diophantine approximation.
%		which has since developed into a rich theory for higher dimensional, for example \cite{Bovey86, Dodson92, Beresnevich06, Beresnevich10}.
Jarn\'ik \cite{Jarnik31} also introduced the exact approximation set defined by
\[
\mathrm{Exact}(\psi)
:=
W(\psi)\setminus
\bigcup_{0<c<1} W(c\psi),
\]
and showed that $\mathrm{Exact}(\psi)$ is non-empty when $\psi(q)=o(q^{-2})$. %This set reflects the precise speed of approximation more sensitively than classical limsup sets. 
	G\"utting \cite{Guting69} further proved $\dim_H\mathrm{Exact}(\tau) =\frac{2}{\tau}$ for $\tau\geq 2$. This result was improved by Bugeaud \cite{Bugeaud03}, who proved that if \(q\mapsto q^{2}\psi(q)\) is non-increasing and \(\sum_{q=1}^{\infty} q\psi(q)<\infty\), then
	\[
	\dim_H \mathrm{Exact}(\psi)
	=
	\dim_H W(\psi)
	=
	\frac{2}{\lambda},
	\qquad
	\lambda
	=
	\liminf_{x\to\infty}
	\frac{-\log\psi(q)}{\log q}.
	\] These developments have stimulated extensive research on exact approximation; see, for example, \cite{Bugeaud08,Bugeaud12,Bandi23}.

	Diophantine approximation has become closely connected with representations of real numbers, as many Diophantine properties can be characterized via symbolic digit expansions. Consequently, various Diophantine approximation problems associated with continued fractions \cite{Fan09}, $\beta$-expansions \cite{Fang20,Zhang24}, and other expansion systems have been extensively studied. In this paper, we focus on a non-autonomous dynamical system, namely the Cantor series expansion, which was first introduced by Cantor \cite{Cantor69}. Various problems concerning this non-autonomous dynamical system, including distribution phenomena and shrinking target problems, have been investigated in \cite{Airey15,Fishman15,Sun17}.

		Let \(Q = \{q_n\}_{n \ge 1}\) be a sequence of integers with \(q_n \ge 2\) for all \(n \ge 1\). For each \(n \ge 1\), define the transformation \(T_{Q,n} : [0,1) \to [0,1)\) by
		\[
		T_{Q,n}(x) = q_n x - \lfloor q_n x \rfloor.
		\]
		We also define the composition
		\[
		T_Q^{n}(x) = T_{Q,n} \circ \cdots \circ T_{Q,1}(x) = q_1 q_2 \cdots q_n x - \lfloor q_1 q_2 \cdots q_n x \rfloor.
		\]
		Set $\varepsilon_1(x) = \lfloor q_1 x \rfloor$ and $\varepsilon_n(x) = \big\lfloor q_n T_Q^{n-1}(x) \big\rfloor$ for $n \ge 2$,
		then every \(x \in [0,1)\) admits a Cantor series expansion of the form
		\begin{equation}\label{eq:Cantor_expansion}
			x =
			\frac{\varepsilon_1(x)}{q_1}
			+ \frac{\varepsilon_2(x)}{q_1 q_2}
			+ \cdots
			+ \frac{\varepsilon_n(x)}{q_1 q_2 \cdots q_n}
			+ \cdots.
		\end{equation}
		%	where \(\varepsilon_n(x) \in \{0,1,\ldots,q_n-1\}\) for all \(n \ge 1\), and \(\varepsilon_n(x) = q_n - 1\) occurs infinitely often. The sequence \((\varepsilon_1(x), \varepsilon_2(x), \ldots)\) is called the \(Q\)-Cantor series expansion of \(x\). The expansion is said to be finite if there exists \(m \ge 1\) such that \(\varepsilon_m(x) > 0\) and \(\varepsilon_n(x) = 0\) for all \(n > m\); otherwise, it is called infinite. It is easily seen that any number with a finite \(Q\)-Cantor series expansion is rational. 
		%Cantor series expansions generalize the classical $b$-ary expansions by allowing a varying base sequence and infinitely many possible digit structures. From a dynamical systems viewpoint, they generate a nonautonomous system. Consequently, they have been extensively studied in the literature \cite{Airey15, Fishman15, Sun17}.
		
		%where $\lambda:=\liminf\limits_{n\rightarrow+\infty}\frac{-\log\psi(n)}{\log n}$.}
	%While the theory of exact approximation in the classical setting has been well developed, much less is known in the context of $\beta$-expansions.
	Let $\omega_n(x)$ denote the first $n$ terms of formula in (\ref{eq:Cantor_expansion}), the approximation error $x-\omega_n(x)$ is naturally governed by the growth of $q_1q_2\cdots q_n$. Recently, Ma et al. \cite{Ma26} consider the approximation order about $\omega_n(x)$ and proved that
	$$
	\lim_{n \to +\infty} \frac{\log \bigl( x - \omega_n(x) \bigr)}{\log q_1\cdots q_n}
	= -1 \quad \text{for}\ \mathcal{L}\ \text{almost every}\ x\in[0,1).$$
	Moreover, if $\psi$ is non-decreasing and satisfies
	$\lambda := \liminf\limits_{n \to +\infty} \frac{\psi(n)}{\log q_1\cdots q_n} \leq 1,$
	then
	\[
	\dim_H \Bigl\{ x \in [0,1] :
	\liminf_{n \to +\infty} \frac{\log \bigl( x - \omega_n(x))}{\psi(n)} = -1 \Bigr\}
	= \frac{1}{\lambda}.
	\]

    Motivated by Jarn\'ik 's pioneering work and subsequent developments on exact approximation, we investigate the analogue of exact approximation problem in the setting of Cantor series expansions. More precisely, we define
	$$W_{c}(\psi):=\Big\{x\in[0,1): x-\omega_n(x)<\frac{\psi(n)}{q_1\cdots q_n} ~\text{for infintely many} ~n\in\mathbb{N}\Big\}$$
and  the corresponding exact approximation set
	\[E_{c}(\psi):=W_{c}(\psi)\backslash
	\bigcup_{0<b<1}W_{c}(b\psi).
	\]
We determine the Hausdorff dimension of $E_{c}(\psi)$. 
\begin{theorem}\label{mainthm}
	Let $Q=\{q_n\}_{n\ge 1}$ be a sequence of integers with $q_n\ge2$ for all $n\in\mathbb{N}$ and satisfying
	\begin{equation}\label{request}
		\lim_{n\rightarrow\infty}
		\frac{\log q_{n}}{\log(q_1\cdots q_n)}=0.
	\end{equation}
	Let $\psi:\mathbb{N}\rightarrow\mathbb{R}_{+}$ be a non-increasing function with $\psi(n)\rightarrow0$ as $n\rightarrow+\infty$.
		Then
		$$\dim_{H}E_{c}(\psi)=\frac{1}{1+\alpha},\quad \text{where}\ \alpha= \liminf_{n \to +\infty} \frac{-\log\psi(n)}{\log (q_1 \cdots q_n)}.$$
	%where $\alpha_i = \liminf\limits_{n \to \infty} \frac{-\log_{\beta_i} \psi_i(n)}{ n}$.
\end{theorem}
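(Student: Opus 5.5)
Write $Q_n=q_1\cdots q_n$. We work with the tail $T_Q^n(x)=Q_n(x-\omega_n(x))$, so that
\[
x\in W_c(\psi)\iff T_Q^n(x)<\psi(n)\ \text{infinitely often}.
\]
Since $\psi(n)\to 0$, the condition $T_Q^n(x)<\psi(n)$ with $\psi(n)<1/q_{n+1}$ says that digit $n+1$ equals $0$ and that the following digits start a small tail. Precisely, $T_Q^n(x)<\psi(n)$ iff $x$ lies in the union, over all cylinders $I_n(\varepsilon_1,\dots,\varepsilon_n)$ of length $Q_n^{-1}$, of the left subinterval of length $\psi(n)/Q_n$.

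The plan has two parts: the upper bound $\dim_H E_c(\psi)\le\dim_H W_c(\psi)\le 1/(1+\alpha)$, and a matching lower bound obtained from a Cantor subset of $E_c(\psi)$.

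\emph{Upper bound.} For each $N$, cover $W_c(\psi)$ by the intervals $[\omega,\omega+\psi(n)/Q_n)$ with $n\ge N$, one for each of the $Q_n$ cylinders of order $n$. For $s>1/(1+\alpha)$ the $s$-cost is
\[
\sum_{n\ge N} Q_n\bigl(\psi(n)/Q_n\bigr)^s=\sum_{n\ge N} Q_n^{1-s}\psi(n)^s.
\]
Using $\psi(n)\le Q_n^{-\alpha+\epsilon}$ for large $n$, each term is at most $Q_n^{1-s(1+\alpha-\epsilon)}$. We need this sum to converge. Since $q_n\ge 2$ we have $Q_n\ge 2^n$, so the terms decay geometrically once $1-s(1+\alpha-\epsilon)<0$. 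Letting $\epsilon\to0$ gives the bound. When $\alpha=\infty$ the same computation gives dimension $0$, which is consistent with $1/(1+\alpha)$.

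\emph{Lower bound.} Assume $\alpha<\infty$. Choose a sparse sequence $n_k$ with
\[
-\log\psi(n_k)/\log Q_{n_k}\to\alpha,
\]
and so sparse that $\log Q_{n_{k-1}}=o(\log Q_{n_k})$.

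Fix $b_k$ with $\psi(n_k)/2<b_k<\psi(n_k)$. At level $n_k$ we force the tail into a window $T_Q^{n_k}(x)\in[b_k,\psi(n_k))$, which can be realised by prescribing digits $n_k+1,\dots,m_k$. Here $m_k$ is minimal with $Q_{m_k}/Q_{n_k}\gtrsim 1/\psi(n_k)$, and hypothesis (\ref{request}) makes this block cost only a factor $Q_{m_k}/Q_{n_k}\approx\psi(n_k)^{-1}$ up to $Q_{m_k}^{o(1)}$.

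We also want to exclude $x\in W_c(b\psi)$ for fixed $b<1$. Along $n_k$ this holds because $b\psi(n_k)<b_k$ once we take $b_k\to\psi(n_k)$ relatively, for instance $b_k=(1-1/k)\psi(n_k)$; this costs only a polynomial-in-$k$ factor. At all other $n$ we need $T_Q^n(x)\ge b\psi(n)$. On the free positions we therefore forbid digit $0$ and allow only digits in $[1,q_{n}-1]$, which forces $T_Q^{n-1}(x)\ge 1/q_n$. For this to be compatible with the target window, we need $\psi(n-1)<1/q_n$ eventually, so we must control $\psi(n)q_{n+1}\to0$. Condition (\ref{request}) gives $q_{n+1}=Q_n^{o(1)}$, but if $\alpha=0$ this is not automatic. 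In that case I would instead forbid the digit strings $0\cdots0$ longer than $\log(1/\psi(n))$ in $Q$-scale, which costs only a subexponential factor.

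Within the forced block $n_k+1,\dots,m_k$ the tails for intermediate $n$ can be large, since nonzero digits are placed early. Removing the choice $0$ loses at most a factor $(1-1/q_n)\ge1/2$ per position. This is harmless when $q_n\to\infty$; when $q_n$ is bounded it contributes exponentially many factors of $1/2$. I would then instead forbid only long zero runs, which costs entropy $o(n)$ relative to $\log Q_n\ge n\log 2$.

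Finally we apply the mass distribution principle with uniform mass on the free digits. For a ball of radius $r$ with $Q_{m_{k-1}}^{-1}\ge r\ge Q_{n_k}^{-1}$, or lying inside the forced block, the standard estimate gives
\[
\mu(B(x,r))\lesssim r^{s}, \qquad s\to \frac{\log Q_{n_k}}{\log Q_{n_k}+\log(1/\psi(n_k))}\to\frac{1}{1+\alpha},
\]
where sparsity and (\ref{request}) absorb the errors. The worst scale is $r\approx\psi(n_k)/Q_{n_k}$.

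I expect the main obstacle to be the lower bound's exclusion of $W_c(b\psi)$ at non-target levels uniformly in $b$, combined with the mass estimate at intermediate scales. In the Cantor setting these scales are governed only by (\ref{request}), not by a constant base, so this is the step to check most carefully.
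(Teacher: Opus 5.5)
\textbf{Case $0<\alpha<\infty$.} Here your outline is essentially the paper's proof. The upper bound uses the same cover by left subintervals of length $\psi(n)/Q_n$. The lower bound uses the same kind of Cantor set:
sparse levels $n_k$; a prescribed block putting $T_Q^{n_k}(x)$ into $[(1-\delta_k)\psi(n_k),\psi(n_k))$ with $\delta_k\to0$ slowly; nonzero digits forced elsewhere (at spacing $M$ in the paper); uniform mass on the free digits; and the mass distribution principle.

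Your claim that the tails inside the block ``can be large'' is made rigorous exactly as in the paper's case (III) of the inclusion $\mathcal{C}_\infty\subset E_c(\psi)$. Once $Q_n/Q_{n_k}\ge 1/\psi(n_k)$ and $n\le m_k$, a nonzero digit at position $m_k+1$ gives
\[
T_Q^n(x)\ \ge\ \frac{Q_n}{Q_{m_k+1}}\ \gtrsim\ \frac{1}{k\,q_{m_k}q_{m_k+1}}\ =\ Q_{n_k}^{-o(1)},
\]
which beats $\psi(n_k)\le Q_{n_k}^{-\alpha+o(1)}$. No special choice of the block is needed. Prescribing the whole block $n_k+1,\dots,m_k$, as you do, is in fact the right way to realise the window. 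The paper leaves $\varepsilon_{n_k+t_k+1}$ free while fixing $\sigma_k$, and in general that cannot keep $T_Q^{n_k}(x)$ inside an interval of length $\delta_k\psi(n_k)$.

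\textbf{The gap: $\alpha=0$.} You rightly single this case out. The paper's argument does not cover it either, since it needs $0<\eta<\alpha/(4M)$. However, your proposed repair does not address the problem you identified.

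When $\psi(n)q_{n+1}\ge 1$, a nonzero digit $\varepsilon_{n+1}$ only guarantees $T_Q^n(x)\ge 1/q_{n+1}$, which can be smaller than $b\psi(n)$. No condition on zero digits or zero runs can help, because ``$\log(1/\psi(n))$ in $Q$-scale'' is then less than one digit.

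For a concrete failure, take $q_n=n+1$, which satisfies \eqref{request}, and $\psi(n)=1/\log(n+2)$, so that $\alpha=0$. Any $x$ with $\varepsilon_{n+1}(x)=1$ for infinitely many $n$ has $T_Q^n(x)<2/(n+2)<b\psi(n)$ infinitely often, hence lies in $W_c(b\psi)$ for every $b\in(0,1)$. Your Cantor set contains such points, so it is not a subset of $E_c(\psi)$.

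What is needed is a lower bound on the \emph{value} of the digit, not just its nonvanishing:
\begin{itemize}
\item At positions with $\psi(n)q_{n+1}\ge1$, forbid $\varepsilon_{n+1}<\lceil 2\psi(n)q_{n+1}\rceil$. This forces $T_Q^n(x)\ge 2\psi(n)$. It removes a proportion at most $3\psi(n)\to0$ of the digits, so it costs only $o(n)=o(\log Q_n)$ in entropy.
\item At the remaining positions, use your zero-run condition.
\end{itemize}

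The forced block must also be rechecked when $\alpha=0$, because the crude bound $1/(k\,q_{m_k}q_{m_k+1})$ need no longer dominate $\psi(n_k)$. The prescribed word then has to be chosen inside the window, greedily scale by scale, so that its intermediate tails stay at or above $\psi(n)$. This is possible once sparseness gives $\psi(n_k)\ll 1/k$.
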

\begin{remark}Under the assumptions of Theorem~\ref{mainthm}, we have $ \dim_{H}W_{c}(\psi)=\frac{1}{1+\alpha}$. It is shown that the exact approximation theory associated with Cantor series expansions exhibits the same dimensional behavior as in the classical rational approximation setting.
\end{remark}

%\begin{Corollary}
%Let $d\geq2$ be an integer. For each $1 \leq i \leq d$, let $\beta_i>1$ be real numbers and let $\psi_i : \mathbb{R}_{>0} \to %\mathbb{R}_{>0}$
%be non-increasing functions with $\psi_i(n)\rightarrow0$ as $n\rightarrow\infty$. Then we have
%\[
%\dim_\mathcal{H} \big( W_{\beta_i}(\psi_1) \times \cdots \times W_{\beta_d}(\psi_d) \big)
%= \min_{1 \leq i \leq d} \Big\{ d - 1 + \dim_\mathcal{H} W_{\beta_i}(\psi_i) \Big\}
%\]
%where $\lambda_i = \liminf\limits_{n \to \infty} \frac{-\log_{\beta_i} \psi_i(n)}{ n}$.
%\end{Corollary}

\section{Preliminaries}
In this section, we first provide a definition for the Hausdorff dimension and its properties. We refer the readers to \cite{Falconer14, Mattila95} for further details.

For any set $E \subset \mathbb{R}^d$ and any $\delta > 0$, let $\{U_i\}$ be a countable collection of sets satisfying $|U_i| \leq \delta$ and $E \subset \bigcup\limits_i U_i$.
Let $s \ge 0$ be a real number, and define
\[
\mathcal{H}_{\delta}^s(E)
= \inf\left\{ \sum_i |U_i|^s : \{U_i\} \text{ is a } \delta\text{-cover of } E \right\},
\]
where the infimum is taken over all possible $\delta$-covers of $E$.
The $s$-dimensional Hausdorff measure of $E$ is then defined by
\[
\mathcal{H}^s(E) = \lim_{\delta \to 0} \mathcal{H}_{\delta}^s(E),
\]
and the Hausdorff dimension of $E$ by
\[
\dim_H E
= \inf\{ s \ge 0 : \mathcal{H}^s(E) = 0 \}
= \sup\{ s \ge 0 : \mathcal{H}^s(E) = \infty \}.
\]

%\begin{pro}
%If $E \subset F$, then $\dim_{\mathcal{H}}(E) \leq \dim_{\mathcal{H}}(F)$.
%Furthermore, if $\{E_i\}_{i\ge 1}$ is a countable collection of subsets of $\mathbb{R}$, then
%\[
%\dim_{\mathcal{H}}\!\left(\bigcup_{i=1}^\infty E_i\right)
%= \sup_{i \ge 1} \dim_{\mathcal{H}} E_i.
%\]
%\end{pro}

The following result provides a general method for estimating lower bounds of Hausdorff dimensions, and is commonly known as the \emph{Mass Distribution Principle}.

\begin{proposition}[Mass Distribution Principle \cite{Falconer14}]\label{Mdp}
	Let $E$ be a Borel measurable subset of $\mathbb{R}^d$, and let $\mu$ be a Borel measure with $\mu(E) > 0$.
	Assume that there exist positive constants $c$ and $\delta$ such that for all $x \in \mathbb{R}^d$ and all $r \in (0,\delta)$,
	\[
	\mu(B(x,r)) \leq c\, r^s.
	\]
	Then $\mathcal{H}^s(E) \geq \frac{\mu(E)}{c}$
	and hence $\dim_H E \geq s$.
\end{proposition}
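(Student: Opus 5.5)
The plan is the standard covering argument, run directly from the definition of $\mathcal{H}^s$ given above. The idea is to show that every sufficiently fine cover of $E$ has $s$-cost at least $\mu(E)/c$, using only countable subadditivity and monotonicity of $\mu$.

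First, fix $0<\eta<\delta$ and take any $\eta$-cover $\{U_i\}$ of $E$. I discard every $U_i$ that does not meet $E$. Such sets can be dropped because they only decrease $\sum_i |U_i|^s$ while still leaving a cover of $E$.

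Second, for each remaining $U_i$ I choose a point $x_i\in U_i$. Then $U_i\subset \overline{B}(x_i,|U_i|)$. Degenerate sets with $|U_i|=0$ are handled by enlarging them to small balls of radius $\epsilon 2^{-i}$, which adds an arbitrarily small amount to the sum.

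To stay strictly inside the hypothesis, which is stated for open balls with $r\in(0,\delta)$, I use $U_i\subset B(x_i,r_i)$ with $r_i=(1+\epsilon)|U_i|$ (or the small radius in the degenerate case). Here $\eta$ is chosen so that $(1+\epsilon)\eta<\delta$. The hypothesis then gives
\[
\mu(U_i)\le \mu(B(x_i,r_i))\le c\,r_i^s = c(1+\epsilon)^s|U_i|^s .
\]
Outer measure is enough for $\mu(U_i)$; alternatively one can pass directly to the ball. In the degenerate case the bound is $c(\epsilon2^{-i})^s$.

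Third, I sum over $i$. Since $E\subset\bigcup_i B(x_i,r_i)$, countable subadditivity gives
\[
\mu(E)\le \sum_i \mu(B(x_i,r_i))\le c(1+\epsilon)^s\sum_i |U_i|^s + c\,\epsilon^s\sum_i 2^{-is}.
\]
For $s>0$ the last term tends to $0$ as $\epsilon\to0$. The case $s=0$ is trivial, since $\mathcal{H}^0(E)$ is the cardinality of $E$, which is at least $1$ when $\mu(E)>0$. Letting $\epsilon\to0$ yields $\sum_i|U_i|^s\ge \mu(E)/c$.

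Finally, I take the infimum over all $\eta$-covers. This gives $\mathcal{H}^s_\eta(E)\ge\mu(E)/c$ for all small $\eta$, and letting $\eta\to0$ gives $\mathcal{H}^s(E)\ge \mu(E)/c>0$. By the definition $\dim_H E=\sup\{t:\mathcal{H}^t(E)=\infty\}=\inf\{t:\mathcal{H}^t(E)=0\}$, positivity of $\mathcal{H}^s(E)$ forces $\dim_H E\ge s$.

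The only delicate point is the bookkeeping between arbitrary cover sets and the balls in the hypothesis. This concerns the open versus closed ball issue, the restriction $r<\delta$, and zero-diameter sets, all of which the $(1+\epsilon)$-enlargement handles. The Borel measurability of $E$ is used only so that $\mu(E)$ is meaningful.
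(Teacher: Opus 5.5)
Your argument is correct and is the standard covering proof from Falconer, which is all the paper relies on here (it cites the result without proving it). The $(1+\epsilon)$-enlargement and the $\epsilon 2^{-i}$ balls are careful bookkeeping for open balls and singletons; for $s>0$ you could equally get $\mu(\overline{B}(x,\rho))\le c\rho^s$ for $\rho<\delta$ directly by continuity from above.

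The one slip is the case $s=0$. The fact that $\#E\ge 1$ does not give $\mathcal{H}^0(E)\ge\mu(E)/c$ when $\mu(E)>c$. The fix is immediate: every nonempty $U_i$ then costs $|U_i|^0=1$, and $\mu(B(x_i,r))\le c$ for any $r\in(0,\delta)$, so your summation yields $\mu(E)\le c\sum_i|U_i|^0$ directly, without any $\epsilon$-correction.
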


We now present some basic properties of the Cantor series expansion. Let $Q = \{q_k\}_{k \ge 1}$ be a sequence of positive integers with $q_k \ge 2$ for all $k \ge 1$.
For each integer $n \ge 1$, denote
\[
D_n
=
\left\{
(\varepsilon_1,\cdots,\varepsilon_n) \in \mathbb{N}^n :
0 \le \varepsilon_k \le q_k - 1,\; 1 \le k \le n
\right\}.
\]
It is clear that $\# D_n= q_1 \cdots q_n$, where $\#$ denotes the cardinality of a finite set.

For any word $(\varepsilon_1,\varepsilon_2,\ldots,\varepsilon_n)\in D_n$, a cylinder set \(I(\varepsilon_1,\varepsilon_2,\ldots,\varepsilon_n)\) of order \(n\) is defined by
\[
I(\varepsilon_1,\varepsilon_2,\ldots,\varepsilon_n)
:=
\{x \in [0,1) : \varepsilon_i(x) = \varepsilon_i,\ \text{for } i=1,2,\ldots,n\}.
\]
Sometimes we simply write \(I_n\) to denote a general cylinder of order \(n\) without specifying the digits \((\varepsilon_1,\varepsilon_2,\ldots,\varepsilon_n)\), and write \(|I_n|\) for its length. By the definition of the \(Q\)-Cantor series expansion, it follows that
\[
I(\varepsilon_1,\cdots,\varepsilon_n)
=
\bigg[
\sum_{i=1}^n \frac{\varepsilon_i}{q_1 \cdots q_i},
\;
\sum_{i=1}^n \frac{\varepsilon_i}{q_1 \cdots q_i}
+
\frac{1}{q_1 \cdots q_n}
\bigg),
\]
which is a left-closed and right-open interval of length
\[
|I(\varepsilon_1,\cdots,\varepsilon_n)|
=
\frac{1}{q_1 \cdots q_n}.
\]
For each $n\ge1$, the cylinders of order $n$ form a partition of $[0,1)$. That is,
\begin{equation*}
	[0,1)
	=
	\bigcup_{(\varepsilon_1,\cdots,\varepsilon_n) \in D_n} I(\varepsilon_1,\cdots,\varepsilon_n),
\end{equation*}
where the union on the right-hand side is disjoint.

Finally, we introduce the following modified mass distribution principle adapted to $Q$-Cantor series expansion.
\begin{lemma}[\cite{Ma26}]\label{3.3}
	Let $Q=\{q_k\}_{k\geq1}$ be a sequence of integers with $q_k\geq2$ for all $k\in\mathbb{N}$ and satisfying  (\ref{request}). Let $\mu$ be a measure supported on a Borel measurable set $E\subseteq [0,1]$ with $\mu(E)>0$. Assume that for some $s>0$, there exists a constant $c > 0$  such that
	\begin{equation*}%\label{e3.5}
		\mu(I_n) \leq c|I_n|^s,
	\end{equation*}
	for all cylinders $I_n$. Then $\dim_HE\geq s$.
\end{lemma}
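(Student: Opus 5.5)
The plan is to reduce Lemma~\ref{3.3} to the Mass Distribution Principle (Proposition~\ref{Mdp}). We fix an arbitrary $\eta>0$ and aim to show that $\mu(B(x,r))\le C r^{s-\eta}$ for all $x$ and all sufficiently small $r$, with $C$ independent of $x$ and $r$. Proposition~\ref{Mdp} then gives $\dim_H E\ge s-\eta$, and letting $\eta\to0$ yields $\dim_H E\ge s$. Throughout we write $Q_n:=q_1\cdots q_n=|I_n|^{-1}$.

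\textbf{Step 1 (choosing the scale).} Given small $r>0$, we pick the unique $n$ with
\[
\frac{1}{Q_{n+1}}\le r<\frac{1}{Q_n}.
\]
This is possible because $Q_n\uparrow\infty$ strictly, and $n\to\infty$ as $r\to0$.

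\textbf{Step 2 (counting cylinders).} The cylinders of order $n+1$ partition $[0,1)$ into intervals of length $Q_{n+1}^{-1}$. The ball $B(x,r)$ is an interval of length $2r$, so it meets at most $2rQ_{n+1}+2\le 4rQ_{n+1}$ of them, using $rQ_{n+1}\ge1$. By hypothesis each such cylinder has $\mu$-mass at most $cQ_{n+1}^{-s}$. Hence
\[
\mu(B(x,r))\le 4c\,r\,Q_{n+1}^{1-s}.
\]

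\textbf{Step 3 (case $s\ge1$).} Since $Q_{n+1}\ge r^{-1}$, we have $Q_{n+1}^{1-s}\le r^{s-1}$. This gives $\mu(B(x,r))\le 4c\,r^s$ directly.

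\textbf{Step 4 (case $s<1$, the main step).} Here we need an upper bound on $Q_{n+1}$ in terms of $r$, and this is exactly where condition~(\ref{request}) enters. Fix $\delta\in(0,1)$. By (\ref{request}), for all large $n$ we have $q_{n+1}\le Q_{n+1}^{\delta}$, hence
\[
Q_{n+1}^{1-\delta}\le \frac{Q_{n+1}}{q_{n+1}}=Q_n<r^{-1},
\]
that is, $Q_{n+1}\le r^{-1/(1-\delta)}$. Substituting into Step 2 gives
\[
\mu(B(x,r))\le 4c\,r^{\,1-\frac{1-s}{1-\delta}}.
\]
The exponent $1-\frac{1-s}{1-\delta}$ tends to $s$ as $\delta\to0$, so choosing $\delta$ small enough that it is at least $s-\eta$ finishes this case.

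\textbf{Where the difficulty lies.} The only real obstacle is the one handled in Step 4. A single digit $q_{n+1}$ can be large, which would make the gap between consecutive cylinder scales $Q_n^{-1}$ and $Q_{n+1}^{-1}$ too big to compare balls with cylinders. Condition~(\ref{request}) makes this gap subexponential on the logarithmic scale, which is precisely what is needed to lose only $r^{-\eta}$.

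To finish, note that $E$ is Borel and $\mu(E)>0$. We use the restriction of $\mu$ to $E$, which is still supported on $E$ and still satisfies the cylinder estimate. The small-$r$ requirement in Proposition~\ref{Mdp} is met by taking $r<Q_{N}^{-1}$, where $N$ is the threshold beyond which (\ref{request}) yields $q_{n+1}\le Q_{n+1}^{\delta}$.
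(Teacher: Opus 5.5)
Your proof is correct and follows essentially the same route as the paper: reduce to the Mass Distribution Principle (Proposition~\ref{Mdp}), and use (\ref{request}) to see that the consecutive cylinder scales $Q_n^{-1}$ and $Q_{n+1}^{-1}$ differ only by the factor $q_{n+1}\le r^{-o(1)}$, so only $r^{-\eta}$ is lost. The paper's one-line sketch covers $B(x,r)$ by a bounded number of cylinders of comparable length (e.g.\ at most three of order $n$, each of length $Q_n^{-1}\le q_{n+1}r$), whereas you count the at most $4rQ_{n+1}$ cylinders of order $n+1$, which is an inessential difference; note only that both arguments tacitly need $\mu(\{1\})=0$ (the point $1$ lies in no cylinder, and for $E=\{1\}$, $\mu=\delta_1$ the lemma as stated fails), which does hold for the measure built on $\mathcal{C}_\infty$.
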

The proof follows from Proposition \ref{Mdp} together with the fact that every ball can be covered by a bounded number of cylinders of comparable length.

\section{Proof of theorem \ref{mainthm}}
\subsection{The upper bound} 
\begin{proposition}
		Let $Q=\{q_n\}_{n\ge1}$ be a sequence of integers with $q_n\ge2$ for all $n\in\mathbb{N}$. Then 
		$$\dim_H E_c(\psi)\le \frac{1}{1+\alpha},~where~\alpha= \liminf_{n \to +\infty} \frac{-\log\psi(n)}{\log (q_1 \cdots q_n)}. $$
	\end{proposition}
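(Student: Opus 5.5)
Since $E_c(\psi)\subseteq W_c(\psi)$, it suffices to show $\dim_H W_c(\psi)\le \frac{1}{1+\alpha}$; the exactness condition plays no role for the upper bound. If $\alpha=0$ there is nothing to prove, since the dimension of any subset of $[0,1)$ is at most $1$. So assume $\alpha>0$ (possibly $\alpha=+\infty$, in which case we aim for dimension $0$). The plan is the standard limsup covering argument: write $W_c(\psi)$ as a limsup of small intervals sitting at the left endpoints of cylinders, and sum $s$-th powers of their lengths over $n\ge N$.

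\emph{Step 1 (describing the set).} For $x\in I(\varepsilon_1,\dots,\varepsilon_n)$ the partial sum $\omega_n(x)$ equals the left endpoint $a(\varepsilon_1,\dots,\varepsilon_n)$ of this cylinder. Hence $0\le x-\omega_n(x)<\psi(n)/(q_1\cdots q_n)$ means
\[
x\in J_n(\varepsilon_1,\dots,\varepsilon_n):=\Big[a(\varepsilon_1,\dots,\varepsilon_n),\,a(\varepsilon_1,\dots,\varepsilon_n)+\frac{\psi(n)}{q_1\cdots q_n}\Big).
\]
This interval has length at most $\psi(n)/(q_1\cdots q_n)$. Therefore, for every $N$,
\[
W_c(\psi)\subseteq\bigcup_{n\ge N}\ \bigcup_{(\varepsilon_1,\dots,\varepsilon_n)\in D_n} J_n(\varepsilon_1,\dots,\varepsilon_n),
\]
and there are $\#D_n=q_1\cdots q_n$ intervals at level $n$.

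\emph{Step 2 (summing the cover).} Fix $s>\frac{1}{1+\alpha}$ and pick $\alpha'<\alpha$ with $s(1+\alpha')>1$. If $\alpha=\infty$, take any $s>0$ and $\alpha'$ large. By the definition of $\alpha$ as a liminf, for all large $n$ we have $\psi(n)\le (q_1\cdots q_n)^{-\alpha'}$. Then
\[
\mathcal H^s_\delta(W_c(\psi))\le\sum_{n\ge N} q_1\cdots q_n\Big(\frac{\psi(n)}{q_1\cdots q_n}\Big)^s\le\sum_{n\ge N}(q_1\cdots q_n)^{1-s(1+\alpha')}.
\]
Since $q_k\ge2$, we have $q_1\cdots q_n\ge 2^n$, and the exponent $1-s(1+\alpha')$ is negative. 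So the tail is bounded by $\sum_{n\ge N}2^{-n(s(1+\alpha')-1)}$, which tends to $0$ as $N\to\infty$. The diameters $\le 2^{-n}$ also tend to $0$, so these are valid $\delta$-covers. This gives $\mathcal H^s(W_c(\psi))=0$, hence $\dim_H\le s$, and letting $s\downarrow\frac1{1+\alpha}$ finishes the proof.

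\emph{Main obstacle.} The only point needing care is the convergence of the series in Step 2. This works because $q_1\cdots q_n$ grows at least geometrically, which follows from $q_k\ge 2$ alone; hence condition (\ref{request}) is not needed here, matching the statement. Two minor points also need a check. The liminf must be used correctly: the inequality $\psi(n)\le (q_1\cdots q_n)^{-\alpha'}$ holds for \emph{all} large $n$ precisely because $\alpha'$ lies strictly below the liminf. And the case $\psi(n)\ge 1$ for small $n$ is irrelevant, since only the tail $n\ge N$ matters.
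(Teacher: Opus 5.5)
Your proof is correct and follows the paper's argument. At each level $n\ge N$ you cover $W_c(\psi)\supseteq E_c(\psi)$ by the $q_1\cdots q_n$ intervals of length $\psi(n)/(q_1\cdots q_n)$ at the left endpoints of the $n$-th order cylinders, then bound the $s$-sum by $\sum_{n\ge N}(q_1\cdots q_n)^{1-s(1+\alpha')}$ using the liminf; your remark that this tail tends to $0$ because $q_1\cdots q_n\ge 2^n$, and your handling of $\alpha=0$ and $\alpha=\infty$, fill in details the paper leaves implicit.
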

	\begin{proof}
		By the definition of $E_c(\psi)$, one has 
		\begin{align*}
			E_c(\psi)\subset W_c(\psi)&=\Big\{x\in[0,1): x-\omega_n(x)<\frac{\psi(n)}{q_1\cdots q_n} ~\text{for infintely many} ~n\in\mathbb{N}\Big\}\\
			&=\bigcap_{N=1}\bigcup_{n=N}\Big\{x\in[0,1): x-\omega_n(x)<\frac{\psi(n)}{q_1\cdots q_n}\Big\}\\
			&\subset\bigcap_{N=1}\bigcup_{n=N}
			\bigcup_{\substack{0 \leq \epsilon_n < q_i \\ 1 \leq i \leq n}}
			B\Big(\frac{\epsilon_1}{q_1}+\cdots
			\frac{\epsilon_n}{q_1\cdots q_n},\frac{\psi(n)}{q_1\cdots q_n}\Big)
		\end{align*}
		By the definition of Hausdorff measure $\mathcal{H}^s$, we have 
		\begin{align*}
			\mathcal{H}^s\big(E_c(\psi)\big)&\le
			\liminf_{N\rightarrow+\infty}\sum_{n=N}^{+\infty}
			(q_1\cdots q_n)\Big(\frac{2\psi(n)}{q_1\cdots q_n}\Big)^s\\
			&=2^s\lim_{N\rightarrow+\infty}\sum_{n=N}^{+\infty}
			\psi(n)^s(q_1\cdots q_n)^{1-s}.
		\end{align*}
		Recall that $\alpha= \liminf\limits_{n \to +\infty} \frac{-\log\psi(n)}{\log (q_1 \cdots q_n)},$ then for any $\eta>0$, there exists $N\in\mathbb{N}$ such that for $n\ge N$, one has 
		$$\frac{-\log \psi(n)}{\log(q_1\cdots q_n)}\ge \alpha-\eta
		\Rightarrow \psi(n)\le 
		(q_1\cdots q_n)^{-(\alpha-\eta)}.$$
		Hence, 
		$$\psi(n)^s(q_1\cdots q_n)^{1-s}\le 
		(q_1\cdots q_n)^{1-s(1+\alpha-\eta)}.$$
		Therefore, for any $s$ with
		$$s>\frac{1}{1+\alpha-\eta},$$
		one has $\mathcal{H}^s(E_c(\psi))=0,$ by the arbitrariness of $\eta>0$, we have $$\dim_H E_c(\psi)\le \frac{1}{1+\alpha}.$$
\end{proof}

\subsection{The lower bound}
In this subsection, we study the lower bound of the Hausdorff dimension of $E_c(\psi)$.  
%The strategy is  to construct a Cantor subset of $E_{\beta}(\psi)$ and then apply the following modified mass distribution principle in $Q$-Cantor series expansion.
%\begin{lemma}\label{3.3}
%Let $Q=\{q_k\}_{k\geq1}$ be a sequence of integers with $q_k\geq2$ for all $k\in\mathbb{N}$ and satisfying  (\ref{request}). Let $\mu$ be a measure supported on a Borel measurable set $E\subseteq [0,1]$ with $\mu(E)>0$. Assume that for some $s>0$, there exists a constant $c > 0$  such that
%\begin{equation*}%\label{e3.5}
%\mu(I_n) \leq c|I_n|^s,
%\end{equation*}
%for all cylinders $I_n$. Then $\dim_HE\geq s$.
%\end{lemma}
We begin with constructing a suitable Cantor subset of $E_c(\psi)$ and define a mass distribution on this subset. In order to attain the exact approximation order, we must select the digits carefully, the long zero blocks must be avoided. Then we need to check carefully that the constructed Cantor subset belong to $E_{c}(\psi).$
\subsection{Construction of the Cantor subset}
Let $s:=\frac{1}{1+\alpha}.$ When $s=0$, inequality holds trivially. Now we assume $0<s\le 1.$ 

Fix $0<\epsilon<1$ sufficiently small, we can choose an large integer $M$ and a number $\eta$ such that
\begin{equation*}
%\label{Meta}
	M=\big\lceil\frac{12}{\epsilon}\big\rceil+1~\text{and}~\eta<\min\Big\{\frac{\alpha}{4M}, \frac{\alpha}{4(1+2\alpha)}, \frac{\epsilon}{4M}\Big\}
\end{equation*}

For convenience, we denote $Q_n=q_1\cdots q_n$. We fix a sequence $\big\{\delta_{k}\big\}_{k\ge1}$ of positive numbers with $\delta_{k}\rightarrow 0$ as $i\rightarrow\infty.$ Since $\psi(n)$ tends to zero as $n$ tends to infinity, then we choose a subsequence $\big\{n_{k}\big\}_{k\geq1}$ of $\mathbb{N}$ such that\begin{equation}\label{nk}
		\lim\limits_{i\rightarrow+\infty}
		\frac{-\log\psi(n_{k})}{\log Q_{n_k}}=
		\liminf\limits_{n\rightarrow+\infty}
		\frac{-\log\psi(n)}{\log Q_n}=\alpha,
	\end{equation}
	furthermore, we require the sequence $\big\{n_{k}\big\}_{k\geq1}$ to be as sparse as possible such that 
	\begin{equation}\label{psidelta}
		\frac{\log\psi(n_{k})}{\log\delta_{k}}
		\rightarrow\infty~{\rm{as}}~k\rightarrow
		\infty~{\rm{or~for~example}}~\delta_{k}
		\ge\psi(n_{k})^{1/k}.\end{equation}
	 We choose a sparse subsequence of $\{n_k\}$ (for simplicity, we still denote by $\{n_k\}$) inductively and then define $\{\delta_k, t_k, N_k, p_k, r_k\}$ as follows. Choose an integer $n_1$ sufficiently large satisfying
\begin{equation}\label{qQ}
	% &\text{(i)}~ \alpha \le \frac{-\log \psi(n_1)}{\log Q_{n_1}} < \alpha+1; & \notag \\
	\frac{\log q_m}{\log Q_m} < \eta ~\text{holds for any}~ m \ge n_1.
	%&\text{(iii)}~\text{we can choose}~ 0<\delta_1<1/2~ \text{such that}~ \delta_1\ge \psi(n_1)^{1/2}& \notag.
\end{equation}
Write $n_1 = p_1M + r_1$ for some integers $p_1\in  \mathbb{N}$ and $1 \le r_1 < M$. Then define integers $t_1$ and $N_1$ such that
$$\frac{Q_{n_1}}{Q_{n_1+t_1}}\le\psi(n_1)<\frac{Q_{n_1}}{Q_{n_1+t_1-1}},$$
$$\frac{Q_{n_1}}{Q_{n_1+N_1}}\le\Big|\big((1-\delta_{1})\psi(n_{1}), \psi(n_{1})\big)\Big|<\frac{Q_{n_1}}{Q_{n_1+N_1-1}}.$$
%Recall that for any $n$-th cylinder $I_n$, one has $|I_n|=\frac{1}{Q_n}$, then there exists an cylinder of order $N_1$ contained in $\big((1-\delta_1)\psi(n_1), \psi(n_1)\big)$, denoted by $I_{N_1}(\sigma_1)$. Since every element in $I_{N_1}(\sigma_1)$ is smaller than $\psi(n_1)$, then $\sigma_1$ begins with at least $t_1$ zeros.

Assume that $n_{k-1}, \delta_{k-1}, t_{k-1}, N_{k-1}, p_{k-1}, r_{k-1}$ have been defined. Choose an integer $n_k$ large enough satisfying
\begin{equation}\label{nkxishu} 
	n_k\ge\frac{4}{\epsilon}(n_{k-1}+ N_{k-1})~\text{then}~ Q_{n_{k-1}+ N_{k-1}}\le Q_{n_k}^{\epsilon/4}.
	% &\text{(iii)}~\text{we can choose}~ 0<\delta_k<1/2~ \text{such that}~ \delta_k\ge \psi(n_k)^{1/k}.&\label{deltapsi}
\end{equation}

Write $n_k =n_{k-1}+ N_{k-1}+ p_kM + r_k$ for some integers $p_k\in  \mathbb{N}$ and $1 \le r_k < M$. Then define integers $t_k$ and $N_k$ such that
\begin{equation}\label{tk}
	\frac{Q_{n_k}}{Q_{n_k+t_k}}\le\psi(n_k)<\frac{Q_{n_k}}{Q_{n_k+t_k-1}},
\end{equation}
\begin{equation}\label{Nk}
	\frac{Q_{n_k}}{Q_{n_k+N_k}}\le\Big|\big((1-\delta_{k})\psi(n_{k}), \psi(n_{k})\big)\Big|<\frac{Q_{n_k}}{Q_{n_k+N_k-1}}.\end{equation}
Hence, $N_k>t_k,$ and $N_k-t_k\asymp n_k.$ 

Recall that for any $n$-th cylinder $I_n,$ $|I_n|=1/Q_n,$ there exists an cylinder of order $N_k$ contained in $\big((1-\delta_{k})\psi(n_{k}), \psi(n_{k})\big),$ denoted by $I_{N_{k}}.$ Since  every element in $I_{N_{k}}$ is smaller than $\psi(n_{k}),$ then the corresponding word begins with at least $t_{k}$ zeros. More precisely, there exists an cylinder of order $N_k-t_k-1$ denoted by $I_{N_k-t_k-1}(\sigma_k),$ where $\sigma_k=(\sigma_{k,1},\ldots,\sigma_{k,N_k-t_k-1})$ such that for any points $x$ whose digit sequence satisfying $$\varepsilon_{n_{k}+t_{k}+2}=\sigma_{k,1},\ldots \varepsilon_{n_{k}+N_{k}}=\sigma_{k,N_k-t_k-1},$$ one has $$Q_{n_k}\cdot\big(x-\omega(x)\big)\in\big((1-\delta_k)\psi(n_k), \psi(n_k)\big).$$
%Then there exists an cylinder of order $N_k$ contained in $\big((1-\delta_k)\psi(n_k), \psi(n_k)\big)$, denoted by $I_{N_k}(\sigma_k)$, where $\sigma_k$ begins with at least $t_k$ zeros.

%Moreover, we have 
%\begin{equation*}
%		\begin{aligned}
	%N_{k}\asymp -\log(\delta_{k}\psi(n_k))
	%\asymp -\log{\psi(n_k)},
	%\end{aligned}
	%\end{equation*}
	%where $``a\asymp b"$ means that there exist unspecified constants $c$ and $C$ such that $c|b|\le |a|\le C|b|.$

	%\begin{lemma}
	%  There exist unspecified constants $c_0$ and $C_0$ such that for $k$ large enough, one has
	%  $$c_0 \delta_k\le\frac{Q_{n_k+t_k}}{Q_{n_k+N_k}}\le C_0 \delta_k.$$
	%\end{lemma}
	%\begin{proof}
	%  By (\ref{tk}) and (\ref{Nk}), we have
	%  $$\frac{Q_{n_k+t_k}}{Q_{n_k}}\asymp \psi(n_k)^{-1}~\text{and}~
	%  \frac{Q_{n_k+N_k}}{Q_{n_k}}\asymp (\delta_k\psi(n_k))^{-1},$$
	%where $``a\asymp b"$ means that there exist unspecified constants $c$ and $C$ such that $c|b|\le |a|\le C|b|.$ Hence, 
	%\end{proof}
	
		Now we give further restrictions on the digits $\varepsilon_k$ to construct a Cantor subset of $E_c(\psi)$. Define $\mathcal{C}_{\infty}$ as the collection
		of points whose digit sequence $(\varepsilon_1, \varepsilon_2, \ldots)$ satisfying
		\begin{subequations}\label{eq6}
			\begin{empheq}[left=\empheqlbrace]{align}
				&\varepsilon_{n_{k}}\in\{1,\ldots q_{n_{k}}-1\}\qquad\qquad\quad\qquad\qquad\qquad\qquad\qquad\qquad\ \; k\geq1;\label{eq6a}\\
				&\varepsilon_{n_{k}+1}=\cdots=\varepsilon_{n_{k}+t_{k}}=0\;\;\qquad\quad\qquad\quad\quad\quad\qquad\qquad\qquad\ \ \;\; k\geq1;\label{eq6b}\\
				&\varepsilon_{n_{k}+t_{k}+1}\in\{1,\ldots q_{n_{k}+t_{k}}-1\} \;\;\;\,\quad\quad\qquad\qquad\qquad\qquad\qquad\ \; \; k\geq1;\label{eq6c}\\
				&(\varepsilon_{n_{k}+t_{k}+2},\ldots \varepsilon_{n_{k}+N_{k}})=\sigma_k\qquad\qquad\qquad\qquad\quad\qquad\qquad\quad\,\; \; k\geq1;\label{eq6d} \\
				& \varepsilon_{n_{k}+N_{k}+1}\in\{1,\ldots q_{n_{k}+N_{k}}-1\} \qquad\qquad\,\quad\quad\qquad\qquad\qquad\ \,\, k\geq1;\label{eq6e} \\
				&\varepsilon_{n_{k}+N_{k}+(j-1)M+1}\in\{1,\ldots q_{n_{k}+N_{k}+(j-1)M+1}-1\} \qquad\quad\qquad1\leq j\leq p_{k+1};\label{eq6f} \\
				&\varepsilon_{n_{k}+N_{k}+p_{k+1}M+1}=\cdots=\varepsilon_{n_{k+1}-1}=0\qquad\qquad\qquad\qquad\quad\;\;\,\   k\geq1;\label{eq6g} \\
				&\varepsilon_{i}\in\{0,\ldots,q_{i}\}\;\qquad\qquad \qquad\qquad\qquad\qquad\qquad\qquad\qquad\quad\, \text{otherwise},
			\end{empheq}
		\end{subequations}
		where $n_{0}=t_{0}=0.$ We also define for each $n\ge 1$, $$
		\mathcal{C}_n=\Big\{I(\varepsilon_1,\cdots, \varepsilon_n): I(\varepsilon_1,\cdots, \varepsilon_n)\cap \mathcal{C}_{\infty}\ne \emptyset\Big\}.$$
	
	 We are now in the place to check that $\mathcal{C}_{\infty}\subset E_c(\psi).$ Let $x\in\mathcal{C}_{\infty}$. As before, the digit sequence of $x$ is denoted by $\big\{\varepsilon_{i}(x)\big\}_{i\ge1}$. 
	%We consider the quantity $Q_{n}\big(x-\omega_{n}(x)\big)$ for all $n\geq n_{1}.$} 
 \begin{lemma}
		$\mathcal{C}_{\infty}\subset E_c(\psi)$
\end{lemma}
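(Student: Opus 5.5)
The plan is to work with the normalized remainder
\[
T_Q^n(x)=Q_n\bigl(x-\omega_n(x)\bigr)=\sum_{i>n}\varepsilon_i(x)\frac{Q_n}{Q_i}\in[0,1),
\]
and to prove two things for every $x\in\mathcal{C}_\infty$. First, $T_Q^{n_k}(x)<\psi(n_k)$ for all $k$, so $x\in W_c(\psi)$. Second, for each $b\in(0,1)$ we have $T_Q^{n}(x)\ge b\psi(n)$ for all sufficiently large $n$, so $x\notin W_c(b\psi)$.

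\emph{Step 1: the indices $n=n_k$.} The digits $\varepsilon_{n_k+1},\dots,\varepsilon_{n_k+N_k}$ are forced by (\ref{eq6b})--(\ref{eq6d}) to be the word of the order-$N_k$ cylinder chosen inside $\bigl((1-\delta_k)\psi(n_k),\psi(n_k)\bigr)$; note that (\ref{eq6c}) must agree with the corresponding digit of that word. Hence
\[
(1-\delta_k)\psi(n_k)\le T_Q^{n_k}(x)<\psi(n_k).
\]
The left endpoint may be attained, but only on a boundary point, which is harmless. The right-hand inequality gives $x\in W_c(\psi)$. The left-hand one, together with $\delta_k\to0$, handles all $n_k$ with $k$ large once $b<1$ is fixed.

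\emph{Step 2: the zero block $n_k<n\le n_k+t_k$.} Here all the digits $\varepsilon_{n_k+1},\dots,\varepsilon_n$ vanish, so
\[
T_Q^n(x)=\frac{Q_n}{Q_{n_k}}\,T_Q^{n_k}(x)\ge 2(1-\delta_k)\psi(n_k)\ge \psi(n)
\]
for large $k$, using $q_i\ge 2$ and the monotonicity of $\psi$.

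\emph{Step 3: the remaining $n$.} These are $n\in(n_k+t_k,\,n_k+N_k]$ and $n\in(n_k+N_k,\,n_{k+1})$. I would use the crude bound
\[
T_Q^n(x)\ge \frac{Q_n}{Q_m},
\]
where $m>n$ is the next index with $\varepsilon_m\ge1$.

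In the gap region, (\ref{eq6f}), (\ref{eq6g}) and (\ref{eq6a}) ensure that $m-n\le 2M$. Then (\ref{qQ}) gives $Q_m/Q_n\le Q_m^{2M\eta}\le Q_n^{3M\eta}$ for $n$ large. On the other hand, the definition of $\alpha$ gives $\psi(n)\le Q_n^{-(\alpha-\eta)}$ eventually. So it suffices that $3M\eta<\alpha-\eta$, which is exactly the kind of condition the choice $\eta<\alpha/(4M)$ is designed for; I may need to adjust constants slightly.

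In the region $(n_k+t_k,\,n_k+N_k]$ the word $\sigma_k$ may contain zeros. There I would instead take $m\le n_k+N_k+1$ via (\ref{eq6e}) and use (\ref{tk}), (\ref{Nk}) to get
\[
\frac{Q_n}{Q_m}\ge \frac{Q_{n_k+t_k+1}}{Q_{n_k+N_k+1}}\gtrsim \frac{\delta_k}{q_{n_k+t_k}\,q_{n_k+N_k+1}}.
\]
This must be compared with $\psi(n)\le\psi(n_k)$. Here (\ref{psidelta}) ($\delta_k\ge\psi(n_k)^{1/k}$), $q_j\le Q_j^\eta$ with $Q_{n_k+N_k+1}\lesssim Q_{n_k}/(\delta_k\psi(n_k))$, and $\psi(n_k)\le Q_{n_k}^{-(\alpha-\eta)}$ should give
\[
\delta_k\, q^{-2}\ge \psi(n_k)^{1/k+O(\eta(1+2\alpha)/\alpha)}\gg\psi(n_k),
\]
using $\eta<\alpha/(4(1+2\alpha))$.

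I expect the main obstacle to be this last comparison, together with the gap-region estimate, since both rely on $\alpha>0$ through the choice of $\eta$. I would first settle the case $0<\alpha<\infty$ as above. If $\alpha=0$, the $s=1$ case needs a separate remark, for instance replacing the bound $\psi(n)\le Q_n^{-(\alpha-\eta)}$ by the direct fact $\psi(n)\to0$ while the lower bounds on $T_Q^n(x)$ stay at least $Q_n^{-o(1)}$. Combining Steps 1--3 gives only finitely many $n$ with $T_Q^n(x)<b\psi(n)$, which proves the lemma.
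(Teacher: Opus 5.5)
Your proposal is correct for $0<\alpha<\infty$ and follows the paper's proof essentially step for step: the same split into $n=n_k$, the zero block, the $\sigma_k$-block (handled via the nonzero digit at $n_k+N_k+1$ together with (\ref{tk}), (\ref{Nk}) and $\delta_k\ge\psi(n_k)^{1/k}$), and the gap region (handled via a nonzero digit at bounded distance and $q_j\le Q_j^{\eta}$); in places you are more careful than the paper, since pinning the digit in (\ref{eq6c}) to the chosen word is genuinely needed for the lower bound at $n_k$ (a free nonzero digit there can push $T_Q^{n_k}(x)$ below $\tfrac23\psi(n_k)$), $\psi(n_k)\le Q_{n_k}^{-(\alpha-\eta)}$ is the right substitute for the relation $Q_{n_k}\asymp\psi(n_k)^{-1/\alpha}$ that (\ref{nk}) does not give, and the last gap before $n_{k+1}$ really can be close to $2M$. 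Your $\alpha=0$ aside is unnecessary, because the choice of $\eta$ (hence $\mathcal{C}_\infty$ itself) already presupposes $\alpha>0$, and as sketched it would not work anyway, since a lower bound $T_Q^n(x)\ge Q_n^{-o(1)}$ cannot dominate a $\psi$ tending to $0$ arbitrarily slowly.
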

\begin{proof}
	 Let $k$ be the integer such that $n_{k}\leq n<n_{k+1}.$ When $n=n_k$, by the construction of  $\mathcal{C}_{\infty}$, we have
		$$Q_{n_k}\cdot\big(x-\omega_{n_k}(x)\big)\in\big((1-\delta_k)\psi(n_k), \psi(n_k)\big).$$
		Therefore, 
		$$x-\omega_{n_k}(x)<\frac{\psi(n_k)}{Q_{n_k}}~\text{holds for infinitely many}~n_k.$$
	
	 Now we need to prove that there exists an integer $K$ such that for any $k\ge K$ and $n_k\le n<n_{k+1},$ one has 
		$$Q_{n}\big(x-\omega_{n}(x)\big)\ge (1-\delta_k)\psi(n).$$ 
	
	 (I) When $n=n_{k},$ by the construction of  $\mathcal{C}_{\infty}$, we have
		$$Q_{n_k}\cdot\big(x-\omega_{n_k}(x)\big)\in\big((1-\delta_k)\psi(n_k), \psi(n_k)\big).$$
		Therefore, 
		$$x-\omega_{n_k}(x)\ge (1-\delta_k)\psi(n_k).$$
	
	 (II) When $n_{k}<n\leq n_{k}+t_{k}.$ Since $\psi$ is a non-increasing function, we have
		\begin{align*}
			Q_{n}\big(x-\omega_{n}(x)\big)&\overset{\textrm{by}\ (\ref{eq6b})}{=}\frac{Q_{n}}{Q_{n_k}} Q_{n_k}\cdot\big(x-\omega_{n_k}(x)\big)\\
			&\ge Q_{n_k}\cdot\big(x-\omega_{n_k}(x)\big)\\&\ge (1-\delta_k)\psi(n_k)\\&\ge (1-\delta_k)\psi(n).
	\end{align*}
	
 (III) When $n_{k}+t_k<n\leq n_{k}+N_{k}.$ Recall that $\varepsilon_{n_k+N_k+1}\neq 0,$ we have
		\begin{align*}
			Q_{n}\big(x-\omega_{n}(x)\big)\ge\frac{Q_n}{Q_{n_k+N_k+1}}\ge\frac{Q_{n_k+t_k}}{Q_{n_k+N_k+1}}
			=\frac{Q_{n_k+t_k}}{Q_{n_k+N_k}\cdot q_{n_k+N_k+1}}.
		\end{align*}
		By (\ref{tk}) and (\ref{Nk}), we have
		\begin{align*}
			\frac{Q_{n_k+t_k}}{Q_{n_k+N_k}}&=\frac{Q_{n_k+t_k}/Q_{n_k}}{Q_{n_k+N_k}/Q_{n_k}}\ge\frac{\psi(n_k)^{-1}}{(\delta_k \psi(n_k))^{-1}\cdot q_{n_k+N_k}}\\&=\frac{\delta_k}{q_{n_k+N_k}}.
		\end{align*}
		Hence, 
		$$Q_{n}\big(x-\omega_{n}(x)\big)\ge\frac{\delta_k}{q_{n_k+N_k}}\cdot\frac{1}{q_{n_k+N_k+1}}
		\overset{\textrm{by}\ (\ref{qQ})}{\ge}\delta_k\cdot Q^{-2\eta}_{n_k+N_k+1}.$$
		By (\ref{Nk}), we have
		$$\frac{Q_{n_k}}{Q_{n_k+N_k-1}}>\delta_k\psi(n_k)\Rightarrow Q_{n_k+N_k}\le \frac{Q_{n_k}}{\delta_k\psi(n_k)}\cdot q_{n_k+N_k}.$$
		Then,
		$$Q_{n_k+N_k+1}=Q_{n_k+N_k}\cdot q_{n_k+N_k+1}\le \frac{Q_{n_k}}{\delta_k\psi(n_k)}\cdot q_{n_k+N_k}\cdot q_{n_k+N_k+1}.$$
		Since $q_{n_k+N_k}\cdot q_{n_k+N_k+1}\le Q^{2\eta}_{n_k+N_k+1},$ one has
		$$Q_{n_k+N_k+1}^{1-2\eta}\le \frac{Q_{n_k}}{\delta_k\psi(n_k)}\Rightarrow
		Q_{n_k+N_k+1}\le \Big(\frac{Q_{n_k}}{\delta_k\psi(n_k)}\Big)^{\frac{1}{1-2\eta}}.$$
		Then 
		$$Q_{n}\big(x-\omega_{n}(x)\big)\ge\delta_k\cdot \Big(\frac{\delta_k\psi(n_k)}{Q_{n_k}}\Big)^{\frac{2\eta}{1-2\eta}}= \delta_k^{1+\frac{2\eta}{1-2\eta}}\cdot\Big(\frac{\psi(n_k)}{Q_{n_k}}\Big)^{\frac{2\eta}{1-2\eta}}.$$
		By the choice of $\{n_k\}$, we have $Q_{n_k}\asymp \psi(n_k)^{-1/\alpha}$, that is to say there exist constants $c_2$ and $C_2$ such that $c_2\psi(n_k)^{-1/\alpha}\le Q_{n_k}\le C_2\psi(n_k)^{-1/\alpha}$, hence $\frac{\psi(n_k)}{Q_{n_k}}\asymp\psi(n_k)^{1+1/\alpha}.$
		Therefore,
		$$Q_{n}\big(x-\omega_{n}(x)\big)\ge c_3\delta_k^{1+\frac{2\eta}{1-2\eta}}\cdot\psi(n_k)^{\frac{2\eta(1+\alpha)}{\alpha(1-2\eta)}}.$$
		Recall that $\delta_k\ge \psi(n_k)^{1/k},$ then
		$$Q_{n}\big(x-\omega_{n}(x)\big)\ge c_3\psi(n_k)^{\frac{1}{k}(1+\frac{2\eta}{1-2\eta})+\frac{2\eta(1+\alpha)}{\alpha(1-2\eta)}}
		=c_3\psi(n_k)^{\beta_k(\eta)},$$
		where $\beta_k(\eta)=\frac{1}{k}(1+\frac{2\eta}{1-2\eta})+\frac{2\eta(1+\alpha)}{\alpha(1-2\eta)}.$ By the definition of $\eta$ and the fact that $1/k\rightarrow 0$, there exists an integer $K_0$ such that for all $k\ge K_0$, we have  $\beta_k(\eta)<1$. Recall that $\psi(n)\rightarrow 0$, then there exists an integer $K_1$ such that
		when $k\ge \max\{K_0, K_1\}$, one has 
		$$c_3\psi(n_k)^{\beta_k(\eta)-1}\ge2>1-\delta_k,$$ 
		thus,
		\begin{align*}
			Q_{n}\big(x-\omega_{n}(x)\big)&\ge c_3\psi(n_k)^{\beta_k(\eta)}=c_3\psi(n_k)^{\beta_k(\eta)-1}\cdot \psi(n_k)\\
			&\ge 2\psi(n_k)>(1-\delta_k)\psi(n).
	\end{align*}
	
	 (IV) If there exists an integer $1\leq j\leq p_{k+1}-1$ such that $n_{k}+N_{k}+(j-1)M< n\leq n_{k}+N_{k}+jM,$ recall that $\epsilon_{n_{k}+N_{k}+jM+1}(x)\neq0$ for any $1\leq j\leq p_{k+1}-1,$ then
		\begin{align*}
			Q_{n}\big(x-\omega_{n}(x)\big)=Q_n
			\big(\frac{\varepsilon_{n+1}}{Q_{n+1}}+\cdots
			\frac{\varepsilon_{n_{k}+N_{k}+jM+1}}
			{Q_{n_{k}+N_{k}+jM+1}}+\cdots\big)\ge\frac{Q_n}
			{Q_{n_{k}+N_{k}+jM+1}}.
			%\textcolor{red}{\ge c_4Q_{n}^{-\eta M} }
		\end{align*}
		By \ref{qQ}, one has
		$$\frac{Q_{n_{k}+N_{k}+jM+1}}{Q_n}=\prod\limits_{i=n+1}^{n_{k}+N_{k}+jM+1} q_i\le Q_{n_{k}+N_{k}+jM+1}^{M\eta}.$$
		Since $\eta M< \frac{\alpha}{4}$, we have 
		$$\frac{Q_n}{Q_{n_{k}+N_{k}+jM+1}} \geq Q_{n_{k}+N_{k}+jM+1}^{-M\eta} \geq Q_n^\frac{-M\eta}{1-\eta M}\ge c_4Q_n^{-\alpha/4}.$$
		%Q_{n_{k}+N_{k}+jM+1}=Q_n\cdot q_{n+1}\cdots q_{n_{k}+N_{k}+jM+1}\le Q_n\cdot Q_{n_{k}+N_{k}+jM+1}^{\eta M}则 Q_{n_{k}+N_{k}+jM+1}^{1-\eta M}\le Q_n,Q_{n_{k}+N_{k}+jM+1}\le Q_n^{\frac{1}{1-\eta M}}
		By (\ref{nk}), there exists an integer $K_2$ such that when $k\ge K_2$, one has
		$$\psi(n)\le Q_n^{-\alpha/2}.$$
		Then,
		\begin{align*}
			Q_{n}\big(x-\omega_{n}(x)\big)&\ge c_4 Q_n^{-\alpha/4}\\
			&=c_4 Q_n^{\alpha/4}\cdot Q_n^{-\alpha/2}\\&\ge c_4 Q_n^{\alpha/4}\psi(n).\end{align*}
		Since $n>n_k\rightarrow\infty, Q_n^{\alpha/4}\rightarrow\infty,$ then there exists an integer $K_3$ such that when $k\ge K_3,$ one has
		$$c_4 Q_n^{\alpha/4}\ge 2>1-\delta_k,$$
		thus,
		$$Q_{n}\big(x-\omega_{n}(x)\big)\ge 2\psi(n)>(1-\delta_k)\psi(n)$$
		holds for any $k\ge K=\max\{K_2,K_3\}.$
	
	 (V) When $n_{k}+N_{k}+(p_{k+1}-1)M< n<n_{k+1},$ recall that $\epsilon_{n_{k+1}}(x)\neq0,$
		similar to (IV), we have 
		$$Q_{n}\big(x-\omega_{n}(x)\big)\ge
		\frac{Q_n}{Q_{n_{k+1}}}>(1-\delta_k)\psi(n).$$ 
	
	 Therefore, for any $x\in\mathcal{C}_{\infty},$ combining (I)-(V) together, we have that the inequality $Q_{n}\big(x-\omega_{n}(x)\big)
		<(1-\delta_{k})\psi(n)$ has no solution in integers $n$ for all $k\ge K=\max\{K_0,K_1,K_2,K_3\}$ and $n_{k}\leq n<n_{k+1}.$ This implies that $\mathcal{C}_{\infty}\subset E_{c}(\psi).$
\end{proof}

\subsection{Supporting measure}
We start with a notation: let $I(\varepsilon_1,\cdots, \varepsilon_{n-1})$ be an element in $\mathcal{C}_{n-1}$ with $n\ge 2$, denote by $$
\mathcal{C}_n(\varepsilon_1,\cdots, \varepsilon_{n-1})=\Big\{I(\varepsilon_1,\cdots, \varepsilon_n):I(\varepsilon_1,\cdots, \varepsilon_n)\in \mathcal{C}_n\Big\}
$$ i.e. the offsprings of $I(\varepsilon_1,\cdots, \varepsilon_{n-1})$ in $\mathcal{C}_n$.

Now we distribute a probability measure $\mu$ on $\mathcal{C}_{\infty}.$ Firstly, define $\mu\big([0, 1]\big)=1.$ By Kolmogorov extension theorem, it suffices to define its value on cylinders since they form a semi-algebra. 

Let $I(\varepsilon_1)$ be a cylinder in $\mathcal{C}_{1}$. By the construction of $\mathcal{C}_{\infty},$
% or the digits requirement (\ref{eq6}), 
there are $q_1$ cylinders in $\mathcal{C}_1$. Thus  for each $I(\varepsilon_1)$ in $\mathcal{C}_{1}$, define $$
\mu(I(\varepsilon_1))=q_1^{-1}.
$$

Then we define the measure on all cylinders in $\mathcal{C}_n$ inductively. Assume the measure of cylinders in $\mathcal{C}_{n-1}$ has been defined. 
Let $I(\varepsilon_1,\cdots, x_n)$ be a cylinder in $\mathcal{C}_n$. 
Define \begin{equation}
	\mu\Big(I(\varepsilon_{1},\ldots,\varepsilon_{n})\Big)=
	\frac{1}{\#\mathcal{C}_{n}\big(\varepsilon_{1},\ldots,\varepsilon_{n-1}\big)}\cdot
	\mu\Big(I(\varepsilon_{1},\ldots,\varepsilon_{n-1})\Big),\nonumber
\end{equation}
where the symbol ``$\#$'' denotes the cardinality of a set. That is, the measure of a mother basic cylinder is evenly distributed among its offsprings. 

 We have the following expressions for its measure.

(I) When $n=n_{k},$ by the construction of $\mathcal{C}_{\infty}$ or the requirement on digits (\ref{eq6}), we have
	\begin{equation}\label{munk}
		\begin{aligned}
			\mu\big(I(\varepsilon_{1},\ldots,\varepsilon_{n_k})\big)&=
			\mu\big(I(\varepsilon_{1},\ldots,\varepsilon_{n_k-1})\big)\cdot\frac{1}{q_{n_k}-1}
			\\&=\mu\big(I(\varepsilon_{1},\ldots,\varepsilon_{n_{k-1}+N_{k-1}+p_{k}M})\big)\cdot \frac{1}{q_{n_k}-1}\\
			&=\mu\big(I(\varepsilon_{1},\ldots,\varepsilon_{n_{k-1}+N_{k-1}})\big)
			\cdot\Big(\prod_{j=1}^{p_k}F_{k-1,j}^{-1}\big)\cdot\frac{1}{q_{n_k}-1}\\
			&=\mu\big(I(\varepsilon_{1},\ldots,\varepsilon_{n_{k-1}})\big)
			\cdot\Big(\prod_{j=1}^{p_k}F_{k-1,j}^{-1}\Big)\cdot\frac{1}{q_{n_k}-1}
			\cdot\frac{1}{q_{n_{k-1}+t_{k-1}+1}-1},
		\end{aligned}
	\end{equation}
	where 
	\begin{equation*}
		\begin{aligned}
			F_{k-1,j}&=(q_{n_{k-1}+N_{k-1}+(j-1)M+1}-1)q_{n_{k-1}+N_{k-1}+(j-1)M+2}\cdots q_{n_{k-1}+N_{k-1}+jM}\\
			&=(q_{n_{k-1}+N_{k-1}+(j-1)M+1}-1)\cdot\prod\limits_{l=2}^{M}q_{n_{k-1}+N_{k-1}+(j-1)M+l}\\
			&\ge 2^{-1}\prod_{l=1}^Mq_{n_{k-1}+N_{k-1}+(j-1)M+l}.
		\end{aligned}
\end{equation*}

(II) When $n_{k}<n\leq n_{k}+t_{k},$ we have
$$\mu\big(I(\varepsilon_{1},\ldots,\varepsilon_{n})\big)
=\mu\big(I(\varepsilon_{1},\ldots,\varepsilon_{n_{k}})\big).$$

(III) When $n=n_{k}+t_{k}+1,$ we have
$$\mu\big(I(\varepsilon_{1},\ldots,\varepsilon_{n})\big)=
\frac{1}{q_{n_{k}+t_{k}+1}-1}\cdot\mu\big(I(\varepsilon_{1},\ldots,\varepsilon_{n_{k}})\big)\le
\frac{2}{q_{n_{k}+t_{k}+1}}\cdot\mu\big(I(\varepsilon_{1},\ldots,\varepsilon_{n_{k}})\big).$$

(IV) When $n_{k}+t_{k}+1<n\leq n_{k}+N_{k},$ we have
$$\mu\big(I(\varepsilon_{1},\ldots,\varepsilon_{n})\big)=
\mu\big(I(\varepsilon_{1},\ldots,\varepsilon_{n_{k}+t_{k}+1})\big)=\frac{1}{q_{n_k+t_k+1}-1}\cdot
\mu\big(I(\varepsilon_{1},\ldots,\varepsilon_{n_k})\big).$$
%$$\mu\big(I(\varepsilon_{1},\ldots,\varepsilon_{n})\big)=
%\mu\big(I(\varepsilon_{1},\ldots,\varepsilon_{n_{k}+t_{k}+1})\big).$$
%

(V) When $n=n_{k}+N_{k}+1,$ we have
\begin{equation*}
	\begin{aligned}
		\mu\big(I(\varepsilon_{1},\ldots,\varepsilon_{n})\big)&=\frac{1}{q_{n_k+N_k+1}-1}\cdot
		\mu\big(I(\varepsilon_{1},\ldots,\varepsilon_{n_k+N_k})\big)\\
		&\le \frac{2}{q_{n_k+N_k+1}}\cdot
		\mu\big(I(\varepsilon_{1},\ldots,\varepsilon_{n_k+N_k})\big).
	\end{aligned}
\end{equation*}

(VI) When $n=n_{k}+N_{k}+pM$ for some $1\leq p\leq p_{k+1},$ we have
$$\mu\big(I(\varepsilon_{1},\ldots,\varepsilon_{n})\big)
=\mu\big(I(\varepsilon_{1},\ldots,\varepsilon_{n_{k}+N_{k}})\big)\cdot\prod_{j=1}^{p}F_{k,j}^{-1}.$$

(VII) When $n_{k}+N_{k}+(p-1)M<n\leq n_{k}+N_{k}+pM-1,$ for some $1\leq p\leq p_{k+1},$ we have
$$\mu\big(I(\varepsilon_{1},\ldots,\varepsilon_{n_{k}+N_{k}+pM})\big)\le
\mu\big(I(\varepsilon_{1},\ldots,\varepsilon_{n})\big)\le \mu\big(I(\varepsilon_{1},\ldots,\varepsilon_{n_{k}+N_{k}+(p-1)M})\big).$$

(VIII) When $n=n_{k}+t_{k}+p_{k+1}M<n<n_{k+1},$ we have
$$\mu\big(I(\varepsilon_{1},\ldots,\varepsilon_{n})\big)=
\mu\big(I(\varepsilon_{1},\ldots,\varepsilon_{n_{k}+t_{k}+p_{k+1}M})\big).$$

\subsection{Estimation on the $\mu$-measure of cylinders}
We are now in a place to estimate the H{\"o}lder exponent of the measure $\mu$ to make use of the mass distribution principle (Lemma \ref{3.3}). So, we need to compare the length of a cylinder with its measure. By (\ref{nk}), there exists an integer $K$, such that for any $k>K$, one has $\alpha-\epsilon\le\frac{-\log \psi(n_k)}{\log Q_{n_k}}\le\alpha+\epsilon$. 
Let $k>K$ be the integer such that $n_{k}\leq n<n_{k+1}.$ We first give the following proposition.

\begin{proposition}\label{pro}
		By the construction of $\mathcal{C}_{\infty},$ we have
		$$\prod_{j=n_{k-1}+N_{k-1}}^{n_k-M}q_j\cdot q_{n_k}\ge Q_{n_k}^{1-\epsilon/2}.$$
	\end{proposition}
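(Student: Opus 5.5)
We view the left-hand side as $Q_{n_k}$ with a few factors removed and show that the removed factors make up only a small power of $Q_{n_k}$. Writing $n_k=n_{k-1}+N_{k-1}+p_kM+r_k$ with $1\le r_k<M$, we have $n_k-M<n_{k-1}+N_{k-1}+p_kM$. The product $\prod_{j=n_{k-1}+N_{k-1}}^{n_k-M}q_j\cdot q_{n_k}$ therefore contains every $q_j$ with $j\le n_k$ except two groups. The first is $q_1\cdots q_{n_{k-1}+N_{k-1}-1}$. The second is the $q_j$ with $n_k-M<j<n_k$, which are at most $M-1$ factors. Hence
\begin{equation*}
\prod_{j=n_{k-1}+N_{k-1}}^{n_k-M}q_j\cdot q_{n_k}\;\ge\;\frac{Q_{n_k}}{Q_{n_{k-1}+N_{k-1}}\cdot\prod_{j=n_k-M+1}^{n_k-1}q_j}.
\end{equation*}
Here we used $Q_{n_{k-1}+N_{k-1}-1}\le Q_{n_{k-1}+N_{k-1}}$ to replace the first group by a slightly larger quantity.

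Next we bound each group in the denominator. The prefix is handled by the sparseness condition (\ref{nkxishu}), which gives $Q_{n_{k-1}+N_{k-1}}\le Q_{n_k}^{\epsilon/4}$. For the tail, each index satisfies $j\ge n_1$, so (\ref{qQ}) applies. Since $j<n_k$, it gives $q_j<Q_j^{\eta}\le Q_{n_k}^{\eta}$. The tail has at most $M-1$ factors, so its product is at most $Q_{n_k}^{(M-1)\eta}\le Q_{n_k}^{M\eta}$. By the choice $\eta<\frac{\epsilon}{4M}$, this is less than $Q_{n_k}^{\epsilon/4}$.

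Combining the two bounds, the denominator is at most $Q_{n_k}^{\epsilon/2}$, and so the product is at least $Q_{n_k}^{1-\epsilon/2}$, as claimed.

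The main point needing care is the index bookkeeping. One must check that the indices from $n_{k-1}+N_{k-1}$ to $n_k-M$ together with $n_k$ cover everything except the prefix and at most $M-1$ tail indices; this uses $r_k\ge1$ and $r_k<M$. One must also check that (\ref{qQ}) applies to the tail indices, which holds because they exceed $n_1$ once $k\ge2$. If $n_k-M<n_{k-1}+N_{k-1}$, the product is empty, but this cannot happen for $k$ large, since $n_k\ge\frac4\epsilon(n_{k-1}+N_{k-1})$.
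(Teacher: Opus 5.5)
Your argument is correct and is essentially the paper's: both write the product as $Q_{n_k}$ divided by the prefix $Q_{n_{k-1}+N_{k-1}-1}$ and the last few factors, then bound the prefix by (\ref{nkxishu}) and the tail by (\ref{qQ}) together with $\eta<\epsilon/(4M)$. The only difference is cosmetic: the paper strips off all $M$ factors $q_{n_k-M+1},\dots,q_{n_k}$ to get $Q_{n_k-M}\ge Q_{n_k}^{1-\epsilon/4}$ and then uses $q_{n_k}\ge 1$, whereas you keep $q_{n_k}$ and remove only $M-1$ factors (your appeal to $1\le r_k<M$ is not needed, since the decomposition is a plain telescoping identity).
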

	\begin{proof}
		Write
		$$\prod_{j=n_{k-1}+N_{k-1}}^{n_k-M}q_j\cdot q_{n_k}=\frac{Q_{n_k-M}}{Q_{n_{k-1}+N_{k-1}-1}}\cdot q_{n_k}.$$
		By (\ref{nkxishu}) and (\ref{qQ}), we have
		$$
		Q_{n_{k-1}+N_{k-1}-1}\le Q_{n_k}^{\epsilon/4}$$
		and
		$$
		\prod_{j=n_{k}-M+1}^{n_k}q_j\le Q_{n_k}^{M\eta}\le Q_{n_k}^{\epsilon/4},
		$$
		where the last inequality is due to the fact that $\eta<\frac{\epsilon}{4M}.$
		Hence,
		$$Q_{n_k-M}=\frac{Q_{n_k}}{\prod\limits_{j=n_{k}-M+1}^{n_k}q_j}\ge Q_{n_k}^{1-\epsilon/4},$$
		then
		$$\prod_{j=n_{k-1}+N_{k-1}}^{n_k-M}q_j\cdot q_{n_k}=\frac{Q_{n_k-M}}{Q_{n_{k-1}+N_{k-1}-1}}\cdot q_{n_k}\ge Q_{n_k}^{1-\epsilon/4}\cdot Q_{n_k}^{-\epsilon/4}=Q_{n_k}^{1-\epsilon/2}.$$
\end{proof}

(I) When $n=n_k$. Recall (\ref{munk}), we have
\begin{equation}\label{honk}
	\begin{aligned}
		\mu\big(I(\varepsilon_1,\cdots, \varepsilon_{n_{k}})\big) &\le \prod_{j=1}^{k} \frac{4}{q_{n_j} \cdot q_{n_{j-1}+t_{j-1}+1}} \cdot 2^{p_j} \cdot \frac{Q_{n_{j-1}+N_{j-1}}}{Q_{n_j-r_j}}\\
		&\le \Big(\frac{1}{q_1\cdots q_{n_k}}\Big)^{1-\epsilon} = |I(\varepsilon_1,\cdots, \varepsilon_{n_{k}})|^{1-\epsilon},\end{aligned}
\end{equation}
where the last inequality is from (\ref{request}), (\ref{nkxishu}) and Proposition \ref{pro}.

		(II) When $n_k<n\le n_k+t_k$, we have
		$$\mu\big(I(\varepsilon_{1},\ldots,\varepsilon_{n})\big)
		=\mu\big(I(\varepsilon_{1},\ldots,\varepsilon_{n_k})\big)
		\le|I(\varepsilon_{1},\ldots,\varepsilon_{n_k})|^{1-\epsilon}.
		$$
		Note that
		\begin{equation*}
			\begin{aligned}
				|I(\varepsilon_{1},\ldots,\varepsilon_{n})|&=\frac{1}{Q_n}
				=\frac{1}{Q_{n_k}}\cdot \frac{Q_{n_k}}{Q_n}\\
				&\ge\frac{1}{Q_{n_k}}\cdot \frac{Q_{n_k}}{Q_{n_k+t_k}}\\
				&\overset{{\text{by}}\ (\ref{tk})}>\frac{1}{Q_{n_k}}\cdot
				\frac{\psi(n_k)}{Q_{n_k+t_k}}\\
				&=\frac{\psi(n_k)}{Q_{n_k+t_k}}\cdot|I(\varepsilon_{1},\ldots,\varepsilon_{n_k})|. 
			\end{aligned}
		\end{equation*}
		Then, by (\ref{request}), we have
		\begin{equation*}
			\begin{aligned}
				|I(\varepsilon_{1},\ldots,\varepsilon_{n_k})|&\le
				\frac{Q_{n_k+t_k}}{\psi(n_k)}\cdot|I(\varepsilon_{1},\ldots,\varepsilon_{n})|\\
				&\le Q_{n_k}^{\alpha+2\epsilon}\cdot|I(\varepsilon_{1},\ldots,\varepsilon_{n})|\\
				&=|I(\varepsilon_{1},\ldots,\varepsilon_{n_k})|^{-(\alpha+2\epsilon)}
				\cdot|I(\varepsilon_{1},\ldots,\varepsilon_{n})|,
			\end{aligned}
		\end{equation*}
		%hence
		%$$|I(\varepsilon_{1},\ldots,\varepsilon_{n_k})|\le
		%|I(\varepsilon_{1},\ldots,\varepsilon_{n})|^{\frac{1}{1+\alpha+\frac{1}{k}+\epsilon}}.
		%$$
		thus,
		\begin{equation*}
			\begin{aligned}
				\mu\big(I(\varepsilon_{1},\ldots,\varepsilon_{n})\big)&\le
				|I(\varepsilon_{1},\ldots,\varepsilon_{n_k})|^{1-\epsilon}
				\\&\le|I(\varepsilon_{1},\ldots,\varepsilon_{n})|^{\frac{1-\epsilon}{1+\alpha+2\epsilon}}.
			\end{aligned}
		\end{equation*}
		
		(III) When $n=n_{k}+t_{k}+1,$ we have
		\begin{equation*}
			\begin{aligned}
				\mu\big(I(\varepsilon_{1},\ldots,\varepsilon_{n})\big)&=
				\frac{1}{q_{n_{k}+t_{k}+1}-1}\cdot\mu\big(I(\varepsilon_{1},\ldots,\varepsilon_{n_{k}})\big)\\
				&\le\frac{2}{q_{n_{k}+t_{k}+1}}\cdot\mu\big(I(\varepsilon_{1},\ldots,\varepsilon_{n_{k}})\big)\\
				&\overset{{\text{by}}\ (\ref{honk})}{\le}|I(\varepsilon_1,\cdots, \varepsilon_{n_k})|^{1-\epsilon}\cdot \frac{2}{q_{n_{k}+t_{k}+1}},
				%&=2\Big(|I(\varepsilon_1,\cdots, \varepsilon_{n_k+t_k+1})| \cdot q_{n_k+t_k+1}\Big)^{\frac{1-\epsilon}{1+\alpha}}\cdot \frac{1}{q_{n_{k}+t_{k}+1}}\\
				%&=2|I(\varepsilon_1,\cdots, \varepsilon_{n_k+t_k+1})|^{\frac{1-\epsilon}{1+\alpha}} \cdot
				%q_{n_k+t_k+1}^{-\frac{\alpha+\epsilon}{1+\alpha}}\\
				%&\le 2|I(\varepsilon_1,\cdots, \varepsilon_{n_k+t_k+1})|^{\frac{1-\epsilon}{1+\alpha}}.
			\end{aligned}
		\end{equation*}
		and 
		\begin{equation*}
			\begin{aligned}
				|I(\varepsilon_1,\cdots, \varepsilon_{n})|&=\frac{1}{Q_{n_k+t_k+1}}=\frac{Q_{n_k}}{Q_{n_k+t_k+1}}
				|I(\varepsilon_1,\cdots, \varepsilon_{n_k})|\\
				&=\frac{Q_{n_k}}{Q_{n_k+t_k-1}}\cdot\frac{1}{q_{n_k+t_k} q_{n_k+t_k+1}}
				|I(\varepsilon_1,\cdots, \varepsilon_{n_k})|\\
				&\overset{{\text{by}}\ (\ref{tk})}> \psi(n_k)\cdot
				\frac{1}{q_{n_k+t_k} q_{n_k+t_k+1}}
				|I(\varepsilon_1,\cdots, \varepsilon_{n_k})|.
			\end{aligned}
		\end{equation*}
		By (\ref{request}), we have
		$$
		\frac{q_{n_k+t_k}\cdot q_{n_k+t_k+1}}{\psi(n_k)}\le Q_{n_k}^{\alpha+3\epsilon},
		$$
		then
		\begin{equation*}
			\begin{aligned}
				|I(\varepsilon_1,\cdots, \varepsilon_{n_k})|&<|I(\varepsilon_1,\cdots, \varepsilon_{n})|\cdot
				\frac{q_{n_k+t_k} q_{n_k+t_k+1}}{\psi(n_k)}\\
				&\le Q_{n_k}^{\alpha+3\epsilon}\cdot|I(\varepsilon_1,\cdots, \varepsilon_{n})|\\&=|I(\varepsilon_1,\cdots, \varepsilon_{n_k})|^{-(\alpha+3\epsilon)}\cdot|I(\varepsilon_1,\cdots, \varepsilon_{n})|,
			\end{aligned}
		\end{equation*}
		hence
		$$|I(\varepsilon_1,\cdots, \varepsilon_{n_k})|<|I(\varepsilon_1,\cdots, \varepsilon_{n})|^{\frac{1}{1+3\epsilon}}.
		$$
		Therefore
		\begin{equation*}
			\begin{aligned}
				\mu\big(I(\varepsilon_{1},\ldots,\varepsilon_{n})\big)&\le|I(\varepsilon_1,\cdots, \varepsilon_{n_k})|^{1-\epsilon}\cdot \frac{2}{q_{n_{k}+t_{k}+1}}\\
				&<|I(\varepsilon_1,\cdots, \varepsilon_{n})|^{\frac{1-\epsilon}{1+\alpha+3\epsilon}}\cdot \frac{2}{q_{n_{k}+t_{k}+1}}\\
				&<2|I(\varepsilon_1,\cdots, \varepsilon_{n})|^{\frac{1-\epsilon}{1+\alpha+3\epsilon}}.
			\end{aligned}
		\end{equation*}

	(IV) When $n_k+t_k+1<n<n_k+N_{k},$ we have
	$$\mu\big(I(\varepsilon_1,\cdots, \varepsilon_n)\big)=\mu\big(I(\varepsilon_1,\cdots, \varepsilon_{n_k+t_k+1})\big)<2|I(\varepsilon_1,\cdots, \varepsilon_{n_k+t_k+1})|^{\frac{1-\epsilon}{1+\alpha+3\epsilon}}.$$
	Recall (\ref{tk}) and (\ref{Nk}), we have
	\begin{equation}\label{iiicd}
		\begin{aligned}
			|I(\varepsilon_1,\cdots, \varepsilon_n)|&=\frac{1}{Q_n}=\frac{1}{Q_{n_k}}
			\cdot\frac{Q_{n_k}}{Q_n}\\&\ge
			\frac{1}{Q_{n_k}}\cdot\frac{Q_{n_k}}{Q_{n_k+N_k-1}}
			\\&\ge \delta_k\frac{\psi(n_k)}{Q_{n_k}}=\delta_k\psi(n_k)|I(\varepsilon_1,\cdots, \varepsilon_{n_k})|
		\end{aligned}
	\end{equation}
	and
	$$Q_{n_k+t_k+1}=Q_{n_k+t_k}\cdot q_{n_k+t_k+1}\ge2\cdot\frac{Q_{n_k}}{\psi(n_k)}.$$
	Therefore,
	\begin{equation*}
		\begin{aligned}
			|I(\varepsilon_1,\cdots, \varepsilon_{n_k+t_k+1})|&=Q_{n_k+t_k+1}^{-1}\le \frac{1}{2}\cdot\frac{\psi(n_k)}{Q_{n_k}}\\&\le \frac{1}{2}\cdot\psi(n_k)|I(\varepsilon_1,\cdots, \varepsilon_{n_k})|\\&\overset{{\text{by}}\ (\ref{iiicd})}\le \frac{1}{2\delta_k}|I(\varepsilon_1,\cdots, \varepsilon_{n})|.
		\end{aligned}
	\end{equation*}
	Recall (\ref{psidelta}) and (\ref{nk}), we have $\delta_k^{-1}\le Q_{n_k}^{\epsilon}$ for sufficiently large $k$. 
	%-\log\psi(n_k)~log \alpha log Q_{n_k},-log\delta_k=o(-\log\psi(n_k))则
	%-log\delta_k=o(\alpha log Q_{n_k})=o(log Q_{n_k}),所以\frac{log\delta_k}{log Q_{n_k}}\rightarrow 0,-\epsilon<|\frac{log\delta_k}{log Q_{n_k}}|<\epsilon,得到 \delta_k>log Q_{n_k}^{-\epsilon}\Rightarrow \delta_k^{-1}<Q_{n_k}^{\epsilon}
	Then
	\begin{equation*}
		\begin{aligned}
			|I(\varepsilon_1,\cdots, \varepsilon_{n_k+t_k+1})|&\le\frac{1}{2\delta_k}|I(\varepsilon_1,\cdots, \varepsilon_{n})|\\&
			\le\frac{1}{2}Q_{n_k}^{\epsilon}\cdot|I(\varepsilon_1,\cdots, \varepsilon_{n})|\\&
			\le\frac{1}{2}|I(\varepsilon_1,\cdots, \varepsilon_{n})|^{-\epsilon} \cdot|I(\varepsilon_1,\cdots, \varepsilon_{n})|.
		\end{aligned}
	\end{equation*}
	
	Thus,
	\begin{equation*}
		\begin{aligned}
			\mu\big(I(\varepsilon_1,\cdots, \varepsilon_n)\big)&<2|I(\varepsilon_1,\cdots, \varepsilon_{n_k+t_k+1})|^{\frac{1-\epsilon}{1+\alpha+3\epsilon}}\\
			&\le 2\Big(\frac{1}{2}|I(\varepsilon_1,\cdots, \varepsilon_{n})|^{1-\epsilon}\Big)^{\frac{1-\epsilon}{1+\alpha+3\epsilon}}\\
			&\le 2|I(\varepsilon_1,\cdots, \varepsilon_{n})|^{\frac{1-4\epsilon}{1+\alpha+3\epsilon}}.
		\end{aligned}
	\end{equation*}
	
	(V) When $n=n_k+N_k,$ for convenience, we denote $I(\varepsilon_1,\cdots, \varepsilon_n)$ by
	$J_k$, then
	\begin{equation*}
		\begin{aligned}
			\mu(J_k)&=\mu(J_{k-1})\cdot\frac{1}{q_{n_k}-1}\frac{1}{q_{n_k+t_k+1}-1}\cdot \prod\limits_{j=1}^{p_k}
			F_{k-1,j}^{-1}\\
			&\le \mu(J_{k-1})\cdot\frac{2}{q_{n_k}}\frac{2}{q_{n_k+t_k+1}}\cdot \prod\limits_{j=1}^{p_k}
			F_{k-1,j}^{-1}
		\end{aligned}
	\end{equation*}
	where
	$$F_{k-1,j}\ge 2^{-1}\prod_{l=1}^{M} q_{n_{k-1}+N_{k-1}+(j-1)M+l}.$$
	One has
	\begin{equation*}
		\begin{aligned}
			\prod\limits_{j=1}^{p_k}F_{k-1,j}^{-1}&\le \prod\limits_{j=1}^{p_k}\Big(2^{-1}\prod_{l=1}^{M} q_{n_{k-1}+N_{k-1}+(j-1)M+l}\Big)^{-1}\\
			&=2^{p_k}\cdot \prod\limits_{j=1}^{p_k}\frac{1}{\prod\limits_{l=1}^{M}q_{n_{k-1}+N_{k-1}+(j-1)M+l}}\\
			&=2^{p_k}\cdot \frac{Q_{n_{k-1}+N_{k-1}}}{Q_{n_k-r_k}}.
		\end{aligned}
	\end{equation*}
	Hence
	$$
	\frac{\mu(J_k)}{\mu(J_{k-1})}\le 4\cdot \frac{2^{p_k}}{q_{n_k}q_{n_k+t_k+1}}\cdot \frac{Q_{n_{k-1}+N_{k-1}}}{Q_{n_k-r_k}}.
	$$
	Note that 
	$$\frac{Q_{n_k}}{Q_{n_k-r_k}}=\prod\limits_{m=n_{k}-r_k+1}^{n_k}q_m\overset{{\text{by}}\ (\ref{qQ})}{\le}
	Q_{n_k}^{\eta M}\le Q_{n_k}^{\epsilon}
	$$
	and 
	$$\frac{Q_{n_k}}{Q_{n_{k-1}+N_{k-1}}}\ge\frac{Q_{n_k-r_k}}{Q_{n_{k-1}+N_{k-1}}}=
	\prod\limits_{m=n_{k-1}+N_{k-1}+1}^{n_k-r_k}q_m\ge 2^{p_k M},$$
	we have 
	$$Q_{n_k-r_k}\ge Q_{n_k}^{1-\epsilon}~\text{and}~2^{p_k}\le \Big(\frac{Q_{n_k}}{Q_{n_{k-1}+N_{k-1}}}\Big)^{\frac{1}{M}}\le Q_{n_k}^{\epsilon}.$$
	Then
	$$\frac{\mu(J_k)}{\mu(J_{k-1})}\le 4\cdot \frac{2^{p_k}}{q_{n_k}q_{n_k+t_k+1}}\cdot \frac{Q_{n_{k-1}+N_{k-1}}}{Q_{n_k-r_k}}\le Q_{n_k}^{-(1-2\epsilon)}\cdot Q_{n_{k-1}+N_{k-1}}.$$
	Now we deal with $|J_k|$, we have
	$$\frac{|J_k|}{|J_{k-1}|}=\frac{Q_{n_{k-1}+N_{k-1}}}{Q_{n_{k}+N_k}},$$
	by the definition of $N_k$ and (\ref{psidelta}) , we have
	\begin{equation*}
		\begin{aligned}
			Q_{n_k+N_k}&=Q_{n_k+N_k-1}\cdot q_{n_k+N_k}\le \frac{Q_{n_k}}{\delta_k \psi(n_k)}\cdot q_{n_k+N_k}\\
			&<Q_{n_k}\cdot Q_{n_k}^{\epsilon}\cdot Q_{n_k}^{\alpha+\epsilon}\cdot Q_{n_k}^{\epsilon}=Q_{n_k}^{1+\alpha+3\epsilon},
		\end{aligned}
	\end{equation*}
	then
	$$\frac{|J_k|}{|J_{k-1}|}>\frac{Q_{n_{k-1}+N_{k-1}}}{Q_{n_k}^{1+\alpha+3\epsilon}}.$$
	Therefore, we have
	\begin{equation*}
		\begin{aligned}
			\log\frac{\mu(J_k)}{\mu(J_{k-1})}&\le -(1-2\epsilon)\log Q_{n_k}+\log Q_{n_{k-1}+N_{k-1}}\\ 
			&\overset{\text{by }\ref{nkxishu}}{\le}-(1-3\epsilon)\log Q_{n_k}
		\end{aligned}
	\end{equation*}
	and 
	\begin{equation*}
		\begin{aligned}
			\log\frac{|J_k|}{|J_{k-1}|}\ge \log Q_{n_{k-1}+N_{k-1}}-(1+\alpha+3\epsilon)\log Q_{n_k}\ge -(1+\alpha+3\epsilon)\log Q_{n_k}.
		\end{aligned}
	\end{equation*}
	Thus,
	$$\frac{-\log\frac{\mu(J_k)}{\mu(J_{k-1})}}{-\log\frac{|J_k|}{|J_{k-1}|}}\ge \frac{1-3\epsilon}{1+\alpha+3\epsilon}.$$
	By applying the above inequality inductively for $i=2,\ldots,k$, we obtain
	$$\mu(J_k)\le C|J_k|^{\frac{1-3\epsilon}{1+\alpha+3\epsilon}}~\text{for some constant}~C.$$
	
	(VI) When $n=n_{k}+N_{k}+pM$ for some $1\le p\le p_{k+1},$ we have
	$$\mu\big(I(\varepsilon_1,\cdots, \varepsilon_n)\big)=\mu(J_k)\cdot \prod\limits_{j=1}^{p}F_{k,j}^{-1}.$$
	Recall that 
	$$F_{k,j}\ge\frac{1}{2}\prod\limits_{l=1}^{M}
	q_{n_k+N_k+(j-1)M+l},$$
	then
	\begin{equation*}
		\begin{aligned}
			\prod\limits_{j=1}^{p}F_{k,j}&\ge 2^{-p}\cdot\prod\limits_{j=1}^{p}\prod\limits_{l=1}^{M}
			q_{n_k+N_k+(j-1)M+l}\\
			&=2^{-p}\cdot\frac{Q_{n_k+N_k+pM}}{Q_{n_k+N_k}}\\
			&=2^{-p}\cdot\frac{|J_k|}{|I(\varepsilon_1,\cdots, \varepsilon_n)|}.
		\end{aligned}
	\end{equation*}
	Hence
	$$\mu\big(I(\varepsilon_1,\cdots, \varepsilon_n)\big)\le 2^{p}\cdot\mu(J_k)\cdot\frac{|I(\varepsilon_1,\cdots, \varepsilon_n)|}{|J_k|}.$$
	Since 
	$$2^{pM}\le \prod\limits
	_{m=n_k+N_k+1}^{n_k+N_k+pM}q_m =\frac{Q_{n_k+N_k+pM}}{Q_{n_k+N_k}}=
	\frac{|J_k|}{|I(\varepsilon_1,\cdots, \varepsilon_n)|},$$
	then
	\begin{equation*}
		\begin{aligned}
			\mu\big(I(\varepsilon_1,\cdots, \varepsilon_n)\big)&\le 2^{p}\cdot\mu(J_k)\cdot\frac{|I(\varepsilon_1,\cdots, \varepsilon_n)|}{|J_k|}\\
			&\le \mu(J_k)\cdot\Big(\frac{|J_k|}{|I(\varepsilon_1,\cdots, \varepsilon_n)|}\Big)^{\frac{1}{M}}\cdot \frac{|I(\varepsilon_1,\cdots, \varepsilon_n)|}{|J_k|}\\&\overset{\text{by (III)}}{\le}C|J_k|
			^{\frac{1-3\epsilon}{1+\alpha+3\epsilon}+\frac{1}{M}-1}
			\cdot|I(\varepsilon_1,\cdots, \varepsilon_n)|^{1-\frac{1}{M}}\\
			&\le C|I(\varepsilon_1,\cdots, \varepsilon_n)|^{\frac{1-3\epsilon}{1+\alpha+3\epsilon}},
		\end{aligned}
	\end{equation*}
	where the last inequality is due to that $\frac{1-3\epsilon}{1+\alpha+3\epsilon}+\frac{1}{M}-1<0.$
	
	(VII) When $n_k+N_k+(p-1)M<n<n_k+N_k+pM$ for some $1\le p\le p_{k+1},$ we have
	\begin{equation*}
		\begin{aligned}\mu\big(I(\varepsilon_1,\cdots, \varepsilon_n)\big)&\le\mu\big(I(\varepsilon_1,\cdots, \varepsilon_{n_k+N_k+(p-1)M})\big)\\&\le 
			C|I(\varepsilon_1,\cdots, \varepsilon_{n_k+N_k+(p-1)M})|^{\frac{
					1-3\epsilon}{1+\alpha+3\epsilon}}. \end{aligned}
	\end{equation*}
	Note that 
	\begin{equation*}
		\begin{aligned}
			\frac{|I(\varepsilon_1,\cdots, \varepsilon_{n_k+N_k+(p-1)M})|}{|I(\varepsilon_1,\cdots, \varepsilon_n)|}&=\frac{Q_n}{Q_{n_k+N_k+(p-1)M}}\\
			&=\prod\limits_{m=n_k+N_k+(p-1)M+1}^{n}q_m\\
			&\le
			\prod\limits_{m=n_k+N_k+(p-1)M+1}^{n_k+N_k+pM}
			q_m\\&\le Q_{n_k+N_k+pM}^{\eta M}\le |I(\varepsilon_1,\cdots,\varepsilon_n)|^{-\eta M},
		\end{aligned}
	\end{equation*}
	then
	$$
	\mu\big(I(\varepsilon_1,\cdots, \varepsilon_n)\big)\le C\Big(|I(\varepsilon_1,\cdots,\varepsilon_n)|^{1-\eta M}\Big)^{\frac{
			1-3\epsilon}{1+\alpha+3\epsilon}}\le C|I(\varepsilon_1,\cdots,\varepsilon_n)|^{\frac{
			1-4\epsilon}{1+\alpha+3\epsilon}}.
	$$
	
	(VIII) When $n_k+N_k+p_{k+1}M<n<n_{k+1},$ similar to (VII), we have 
	$$\mu\big(I(\varepsilon_1,\cdots, \varepsilon_n)\big)\le C|I(\varepsilon_1,\cdots,\varepsilon_n)|^{\frac{
			1-4\epsilon}{1+\alpha+3\epsilon}}.$$
	
	In summary, we have shown that for all cylinders $I(\varepsilon_1,\cdots,\varepsilon_n)$ with $n\ge n_2$,
	$$\mu\big(I(\varepsilon_1,\cdots, \varepsilon_n)\big)\le C|I(\varepsilon_1,\cdots,\varepsilon_n)|^{\frac{
			1-4\epsilon}{1+\alpha+3\epsilon}}.$$
	Therefore, by Lemma \ref{3.3}, we get
	$$\dim_H\mathcal{C}_{\infty}\ge \frac{
		1-4\epsilon}{1+\alpha+3\epsilon},$$
	by letting $\epsilon\rightarrow0$, we obtain that 
	$$\dim_H\mathcal{C}_{\infty}\ge \frac{
		1}{1+\alpha}.$$
	\\


\medskip
	
	
	
	
	{\small\begin{thebibliography}{99}
			\bibitem{Airey15}
			D. Airey and B. Mance, Unexpected distribution phenomenon resulting from Cantor series expansions, Adv. Math. {\bf 279} (2015), 372--404.
			
			\bibitem{Bandi23}
			P. Bandi, A. Ghosh and D. Nandi, Exact approximation order and well-distributed sets, Adv. Math. {\bf 414} (2023), Paper No. 108871, 19 pp.
			
%			\bibitem{Beresnevich06}
%			V.~V. Beresnevich, D. Dickinson and S.~L. Velani, Measure theoretic laws for lim sup sets, Mem. Amer. Math. Soc. {\bf 179} (2006), no.~846, x+91 pp.
			
%			\bibitem{Beresnevich10}
%			V.~V. Beresnevich and S.~L. Velani, Classical metric Diophantine approximation revisited: the Khintchine-Groshev theorem, Int. Math. Res. Not. (2010), no.~1, 69--86.
			
%			\bibitem{Bovey86}
%			J. Bovey and M.~M. Dodson, The Hausdorff dimension of systems of linear forms, Acta Arith. {\bf 45} (1986), no.~4, 337--358.
            \bibitem{Besicovitch34}
            A.~S. Besicovitch, Sets of Fractional Dimensions (IV): On Rational Approximation to Real Numbers, J. London Math. Soc. {\bf 9} (1934), no.~2, 126--131.
			
			\bibitem{Bugeaud03}
			Y. Bugeaud, Sets of exact approximation order by rational numbers, Math. Ann. {\bf 327} (2003), no.~1, 171--190.%; MR2006007
			
			\bibitem{Bugeaud08}
			Y. Bugeaud, Sets of exact approximation order by rational numbers. II, Unif. Distrib. Theory {\bf 3} (2008), no.~2, 9--20.
			
			\bibitem{Bugeaud12}
			Y. Bugeaud and C.~G.~T. de~Araujo~Moreira, Sets of exact approximation order by rational numbers III, Acta Arith. {\bf 146} (2011), no.~2, 177--193.
			
			\bibitem{Cantor69}
			G. Cantor, Ueber die einfachen Zahlensysteme. Z. Math. Phys, {\bf 14} (1869), 121--128.
			
%			\bibitem{Dodson92}
%			M.~M. Dodson, Hausdorff dimension, lower order and Khintchine's theorem in metric Diophantine approximation, J. Reine Angew. Math. {\bf 432} (1992), 69--76.
			
			\bibitem{Falconer14}
			K.~J. Falconer,  Fractal geometry, third edition, Wiley, Chichester, 2014.
			
			\bibitem{Fan09}
			A.~H. Fan et al., On Khintchine exponents and Lyapunov exponents of continued fractions, Ergodic Theory Dynam. Systems {\bf 29} (2009), no.~1, 73--109.
			
			\bibitem{Fang20}
			L. Fang, M. Wu and B. Li, Approximation orders of real numbers by $\beta$-expansions, Math. Z. {\bf 296} (2020), no.~1-2, 13--40.%; MR4140729
			
			\bibitem{Fishman15}
			L. Fishman et al., Shrinking targets for nonautonomous dynamical systems corresponding to Cantor series expansions, Bull. Aust. Math. Soc. {\bf 92} (2015), no.~2, 205--213.
			
			\bibitem{Guting69}
			R. G\"uting, On Mahler's function $\theta \sb{1}$, Michigan Math. J. {\bf 10} (1963), 161--179.
			
			\bibitem{Jarnik29}
			V. Jarn\'ik, Diophantische Approximationen und Hausdorffsches Mass, Matem. Sb. {\bf 36} (1929), 371--382.
			
			
			\bibitem{Jarnik31}
			V. Jarn\'ik, \"Uber die simultanen diophantischen Approximationen, Math. Z. {\bf 33} (1931), no.~1, 505--543.%; MR1545226
			
			\bibitem{Khinchin24}
			A.~Y. Khinchine, Einige S\"atze \"uber Kettenbr\"uche, mit Anwendungen auf die Theorie der Diophantischen Approximationen, Math. Ann. {\bf 92} (1924), no.~1-2, 115--125.%; MR1512207
			
			\bibitem{Ma26}
			C. Ma et al., Approximation orders of real numbers in Cantor series expansions, J. Math. Anal. Appl. {\bf 553} (2026), no.~2, Paper No. 129924, 15 pp.
			
			\bibitem{Mattila95}
			P. Mattila, Geometry of sets and measures in Euclidean spaces, Cambridge Studies in Advanced Mathematics, 44, Cambridge Univ. Press, Cambridge, 1995.
			
			
			\bibitem{Sun17}
			Y. Sun and C. Cao, Dichotomy law for shrinking target problems in a nonautonomous dynamical system: Cantor series expansion, Proc. Amer. Math. Soc. {\bf 145} (2017), no.~6, 2349--2359.
			
			
			\bibitem{Zhang24}
			X. Zhang and W. Zhong, Exact Diophantine approximation of real numbers by $\beta$-expansions, Discrete Contin. Dyn. Syst. {\bf 44} (2024), no.~9, 2684--2696.%; MR4762614
			
			\vskip 4pt
			\vskip 15pt
	\end{thebibliography}}
\end{document}